\newtheorem{theorem}{Theorem}[section]
\newtheorem{corollary}{Corollary}[section]
\newtheorem{lemma}{Lemma}[section]
\theoremstyle{definition}
\newtheorem{definition}{Definition}[section]
\theoremstyle{remark}
\newtheorem{remark}{Remark}[section]
\numberwithin{equation}{section}
\theoremstyle{conjecture}
\newtheorem*{conjecture}{Conjecture}
\theoremstyle{conjecture1}
\newtheorem*{conjecture1}{Yau's Conjecture}
\theoremstyle{conjecture2}
\newtheorem*{conjecture2}{Generalized Yau's Conjecture}
\newcounter{stepnum}
\begin{document}

\title[Biharmonic functions on open manifolds]{The qualitative behavior for biharmonic functions on open manifolds}
\setlength{\baselineskip}{1.1\baselineskip}

\author[Wang]{Lin Wang}
\address{School of Mathematical Sciences, Shanghai Jiao Tong University\\ 800 Dongchuan Road \\ Shanghai, 200240 \\ P. R. China}%
\email{wanglin9833@sjtu.edu.cn}

\author[Zhu]{Miaomiao Zhu}
\address{School of Mathematical Sciences, Shanghai Jiao Tong University\\ 800 Dongchuan Road \\ Shanghai, 200240 \\ P. R. China}
\email{mizhu@sjtu.edu.cn}

\thanks{The second author would like to thank Professor William P. Minicozzi II for valuable advice and encouragement.}

\subjclass[2010]{31A30, 58J05}
\keywords{Biharmonic functions, nonnegative Ricci curvature, polynomial growth.}

\date{June 18, 2025}
%\dedicatory{}%
%\commby{}%
% ----------------------------------------------------------------
\begin{abstract}
	For a complete noncompact Riemannian manifold with nonnegative Ricci curvature, we show that bounded biharmonic functions are constant and the space consists of biharmonic functions with polynomial growth of a fixed rate is finite dimensional.  Also, we derive a Weyl type bound for this space. 	Finally, we present a finite dimensional result for a class of fourth-order operators on $\mathbb{R}^n$  satisfying certain coefficient conditions.
\end{abstract}
\maketitle
%\tableofcontents
% ----------------------------------------------------------------

\section{Introduction}\label{intro}
	Let $(M^n,g)$ be an $n$-dimensional Riemannian manifold with metric $g$ and it will be simply referred as $M^n$ or just $M$. Denote $\nabla$ and $\Delta$ to be the gradient operator and the Laplace-Beltrami operator on $M$ respectively. A function $u$ on $M$ is called harmonic if
	\[\Delta u=0,\]
	and it is called biharmonic if
	\begin{equation*}%\label{1-1}
		\Delta^2 u=0.
	\end{equation*}
	 
	Around 1974, Yau \cite{Yau1} established the famous Liouville type theorem for harmonic functions, namely, any positive harmonic function on complete noncompact Riemannian manifolds with nonnegative Ricci curvature must be constant. Later on, Cheng-Yau \cite{Cheng-Yau} generalized this result and established the so called Cheng-Yau gradient estimate. As a consequence of this gradient estimate, Cheng \cite{Cheng} showed that harmonic functions with sublinear growth on such manifolds must be constant. Based on these fundamental observations, Yau proposed the following conjecture (see e.g. \cite{Yau3,Yau4,Yau5} and \cite{Li1}): 
	
	\begin{conjecture1}
		For an open manifold with nonnegative Ricci curvature, the space of harmonic functions with polynomial growth of a fixed rate is finite dimensional.
	\end{conjecture1}
	Here, functions with polynomial growth with a fixed rate are defined as follows:	
	\begin{definition}\label{cond-biharmonic}
		Suppose $M^n$ is an open manifold, namely, a complete noncompact Riemannian manifold without boundary. Let $p \in M$ be a fixed point and $r(x)$ be the distance from $x$ to $p$. Then for $d\ge 0$, we say that a function $u$ on $M$ has polynomial growth of rate $d$ with respect to $p$ if 
		\begin{equation}\label{poly-grow}
			|u|(x) \le C\left(1+r^d(x)\right)
		\end{equation} 
		for some constant $0< C<\infty$.
	\end{definition}
	
	 Yau's conjecture has attracted lots of attention and there has been extensive and substantial work about this problem, see e.g. Kazdan\cite{Kazdan}, Li-Tam\cite{Li-Tam1,Li-Tam2}, Wu\cite{Wu}, Donnelly-Fefferman\cite{Donnelly-Fefferman}, Kassue\cite{Kasue1,Kasue2}, Li\cite{Li1,Li2}, Cheeger-Colding-Minicozzi\cite{Cheeger-Colding-Minicozzi}, Wang \cite{Wang}, Christiansen-Zworski\cite{Christiansen-Zworski}, Lin \cite{Lin} and Colding-Minicozzi\cite{Colding-Minicozzi2}. Yau's conjecture was completely solved in a celebrated work by Colding-Minicozzi\cite{Colding-Minicozzi1} and furthermore a sharp Weyl type upper bound for harmonic functions was derived in \cite{Colding-Minicozzi3}. Subsequently, Li \cite{Li3} provided another proof of Yau's conjecture. 

	As a natural generalization of harmonic functions, the theory of biharmonic functions can date back to 1860s when Airy \cite{Airy} and Maxwell \cite{Maxwell} investigated it to describe a mathematical model of elasticity. In fact, biharmonic functions have applications in a wide range of fields, including elasticity theory and fluid dynamics etc. Harmonic functions are always biharmonic, however, the reverse is in general not true. For instance, there exist manifolds of arbitrary dimension on which the space composed of proper biharmonic (biharmonic but not harmonic) functions is not empty, see e.g. classical works in the 1970s by Sario-Wang\cite{Sario-Wang2} and Sario-Nakai-Wang-Chung\cite{Sario-Nakai-Wang-Chung}. A specific and simple example of proper biharmonic function is the square distance function on $\mathbb{R}^n$, that is,
		\begin{equation*}
			u(x)=r^2(x)=\sum\limits_{i=1}^n x_i^2,
		\end{equation*}
	where $x=\left(x_1,\cdots, x_n\right) \in \mathbb{R}^n$. It is easy to check that $u$ is a biharmonic function but not harmonic, and $u$ satisfies the polynomial growth condition \eqref{poly-grow} with any $d \ge 2$.  
		
	On the other hand, there are many works aiming at looking for extra conditions under which biharmonicity implies harmonicity. For instance, Baird-Fardoun-Ouakkas\cite{Baird-Fardoun-Ouakkas} showed that biharmonic functions on complete manifolds with nonnegative Ricci curvature are harmonic if they have finite bienergy $\int_M |\Delta u|^2<\infty$. Later on,  Nakauchi-Urakawa-Gudmundsson\cite{Nakauchi-Urakawa-Gudmundsson} generalized this result by proving that biharmonic functions on a complete noncompact manifold $M$ with finite bienergy are harmonic in the case of ${\rm{Vol}}(M)=\infty$ (note that nonnegative Ricci curvature implies infinite volume, see e.g. \cite{Yau2} and \cite{Calabi}), and moreover, they showed that the finiteness of energy and bienergy of a biharmonic function $u$ on a complete manifold $M$ also implies that $u$ is harmonic. In this direction, Maeta\cite{Maeta} proved that a biharmonic function $u$ on a complete manifold $M$ with ${\rm{Vol}}(M)=\infty$ is harmonic if it has finite $q$-bienergy $\int_M |\Delta u|^q$ with some $2 \le q<\infty$ and biharmonicity of function $u$ on a complete manifold $M$ also implies harmonicity if $\int_M |\nabla u|^2<\infty$ and $\int_M |\Delta u|^q <\infty$ for some $2 \le q<\infty$. Furthermore, Luo \cite{Luo2} proved that a biharmonic function $u$ on a complete manifold $M$ is harmonic if ${\rm{Vol}}(M)=\infty$ and $\int_M |\Delta u|^q <\infty$ for some $1 < q < \infty$, or if $\int_M |\nabla u|^t < \infty$ and $\int_M |\Delta u|^q <\infty$ for some $1 \le t \le \infty$ and some $1 < q < \infty$.
	
It is well known that harmonic functions satisfy the Liouville type property  (see e.g. \cite{Edward, Moser, Yau1, Yau2}), namely, harmonic functions are constant under certain boundness conditions. In particular, bounded harmonic functions on open manifolds with nonnegative Ricci curvature are constant, see e.g. \cite{Cheng, Yau1}.  For biharmonic functions on open manifolds, however, relatively little is known.  

Around 1932, Nicolesco \cite{Nicolesco1} showed that every bounded biharmonic function on $\mathbb{R}^n$ is constant due to the Pizetti's formula, see also Nicolesco\cite{Nicolesco2} and Huilgol \cite{Huilgol}. In fact, there do exist complete noncompact manifolds of arbitrary dimension that carry nonconstant bounded biharmonic functions and there are also complete manifolds fail to carry nonconstant bounded biharmonic functions, see e.g. Sario-Wang\cite{Sario-Wang2}, Sario\cite{Sario1} and the references therein. Therefore, it is natural to impose the following:
	\begin{conjecture}
		Any bounded biharmonic function on open manifolds with nonnegative Ricci curvature is constant.
	\end{conjecture}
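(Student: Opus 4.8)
The plan is to reduce the statement to Yau's Liouville theorem for bounded harmonic functions. Set $v:=\Delta u$. Since $\Delta^2 u=0$, the function $v$ is harmonic on $M$, and therefore $v^2$ is subharmonic because $\Delta v^2=2|\nabla v|^2\ge 0$. If I can show that $v\equiv 0$, then $u$ is a bounded harmonic function on an open manifold with nonnegative Ricci curvature, hence constant by the classical result of Yau (and Cheng). Thus everything comes down to proving $\Delta u\equiv 0$.

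The core of the argument is a Caccioppoli-type interior estimate for the bi-Laplacian: for the distance ball $B_R=B_R(q)$ centred at any $q\in M$,
\begin{equation*}
\int_{B_R}(\Delta u)^2\,dV\le \frac{C}{R^4}\int_{B_{2R}}u^2\,dV,
\end{equation*}
with $C=C(n)$. To obtain this I would test the equation $\Delta^2u=0$ against $\phi^2 u$, where $\phi$ is a cutoff with $\phi\equiv 1$ on $B_R$, $\mathrm{supp}\,\phi\subset B_{2R}$, $|\nabla\phi|\le C/R$ and $|\Delta\phi|\le C/R^2$; such radial cutoffs exist because the Laplacian comparison theorem gives $\Delta r\le (n-1)/r$ under $\mathrm{Ric}\ge 0$. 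Integrating by parts twice yields
\begin{equation*}
\int \Delta(\phi^2u)\,\Delta u=0,\qquad \Delta(\phi^2 u)=\phi^2\Delta u+2\nabla(\phi^2)\cdot\nabla u+u\,\Delta(\phi^2),
\end{equation*}
so that $\int\phi^2(\Delta u)^2$ is expressed through cross terms involving $|\nabla\phi|\,|\nabla u|\,|\Delta u|$ and $|u|\,|\Delta u|\,|\Delta(\phi^2)|$. Using Young's inequality to absorb the $(\Delta u)^2$-contributions, together with the interpolation identity $\int\psi^2|\nabla u|^2=-\int\psi^2u\,\Delta u+\tfrac12\int u^2\Delta(\psi^2)$ to trade the gradient term against $\int\psi^2(\Delta u)^2$ and $\int u^2$, one removes all derivatives of $u$ from the right-hand side and is left with the claimed bound.

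With the estimate in hand, boundedness $|u|\le C_0$ gives $\int_{B_{2R}}u^2\le C_0^2\,\mathrm{Vol}(B_{2R})$, whence
\begin{equation*}
\int_{B_R(q)}(\Delta u)^2\le \frac{C\,C_0^2}{R^4}\,\mathrm{Vol}(B_{2R}(q)).
\end{equation*}
Applying the Li--Schoen mean value inequality to the nonnegative subharmonic function $v^2=(\Delta u)^2$ and then Bishop--Gromov volume doubling, both valid under $\mathrm{Ric}\ge 0$ with constants depending only on $n$, I get
\begin{equation*}
(\Delta u)^2(q)\le \frac{C}{\mathrm{Vol}(B_R(q))}\int_{B_R(q)}(\Delta u)^2\le \frac{C\,C_0^2}{R^4}\cdot\frac{\mathrm{Vol}(B_{2R}(q))}{\mathrm{Vol}(B_R(q))}\le \frac{C\,C_0^2}{R^4}.
\end{equation*}
Letting $R\to\infty$ forces $\Delta u(q)=0$, and since $q$ is arbitrary, $u$ is harmonic; the Liouville theorem then finishes the proof.

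The main obstacle is the Caccioppoli estimate. The delicate point is the simultaneous control of the gradient and the Laplacian of $u$: one cannot bound $\int|\nabla u|^2$ directly from boundedness of $u$, so the interpolation step is essential, and the cutoff powers must be chosen so that every term either carries a full factor of $\phi$ (for absorption) or decays like $R^{-2}$ or $R^{-4}$. A second technical point is making the cutoff with $|\Delta\phi|\le C/R^2$ rigorous across the cut locus, which is handled by the standard Calabi trick of working with the distributional Laplacian. Once these are settled, the remaining steps are routine consequences of the geometry under $\mathrm{Ric}\ge 0$.
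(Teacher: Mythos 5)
Your overall architecture is exactly the paper's: the conjecture is proved there as a consequence of Theorem \ref{Biharmonic Liouville Theorem}, whose proof runs through the same three steps you propose --- a Caccioppoli/reverse-Poincar\'e estimate for $\int_{B_r}|\Delta u|^2$ obtained by testing $\Delta^2u=0$ against a cutoff times $u$ (Lemma \ref{RP-inqual}, which tests against $\varphi^4 u$ and uses the same interpolation identity for $\int\varphi^2|\nabla u|^2$), then the Li--Schoen mean value inequality applied to the subharmonic function $|\Delta u|$ together with Bishop--Gromov, giving $\sup_{B_r}|\Delta u|^2\le C r^{-2}\to 0$, and finally Cheng's (or, in your bounded case, Yau's) Liouville theorem. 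Your claimed $R^{-4}$ rate is in fact attainable (take the Young weights in the absorption step proportional to $R^2$ rather than constant) and is slightly stronger than the paper's $r^{-2}$ version of \eqref{EQ2-1}, but either rate suffices once Li--Schoen is applied. One secondary point: with the test function $\phi^2u$ the cross term $\int|\nabla\phi|^2\,u\,\Delta u$ carries no factor of $\phi$ where $\nabla\phi\ne 0$, so it cannot be absorbed into $\int\phi^2(\Delta u)^2$; you gesture at this when you say the cutoff powers must be chosen correctly, and the paper's choice $\varphi^4u$ is the standard fix.

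There is, however, one genuine gap: your justification of the cutoff. You assert that cutoffs with $|\Delta\phi|\le C/R^2$ exist ``because the Laplacian comparison theorem gives $\Delta r\le (n-1)/r$,'' with Calabi's trick handling the cut locus. Laplacian comparison is one-sided. For a radial cutoff $\phi=\chi(r)$ with $\chi'\le 0$, the identity $\Delta\phi=\chi''(r)+\chi'(r)\,\Delta r$ combined with $\Delta r\le(n-1)/r$ yields only the lower bound $\Delta\phi\ge -C/R^2$; an upper bound on $\Delta\phi$ would require a lower bound on $\Delta r$, which fails under $\mathrm{Ric}\ge 0$ alone --- $\Delta r$ can be arbitrarily negative, and its distributional singular part on the cut locus is a nonpositive measure, which Calabi's trick preserves rather than removes, so $\chi'\,\Delta r$ acquires an uncontrolled \emph{positive} singular part. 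Since your cross term $\int u\,\Delta(\phi^2)\,\Delta u$ has no sign, one-sided control of $\Delta\phi$ does not close the estimate. The correct input is the Laplacian cut-off property, which under nonnegative (or asymptotically nonnegative, quadratically decaying) Ricci curvature is a nontrivial theorem of G\"uneysu and Bianchi--Setti resting on Cheeger--Colding rigidity theory; this is precisely what the paper invokes in Remark \ref{RK1-1} and at the start of the proof of Lemma \ref{RP-inqual}. With that citation substituted for your radial construction (and the cutoff power raised as above), your argument is correct and coincides with the paper's.
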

	
	Furthermore, the well known and classical fact shows that harmonic functions on the Euclidean space $\mathbb{R}^n$ with polynomial growth of a fixed rate are harmonic polynomials with degree at most of this fixed rate, then the space consists of such functions is finite dimensional and the exact dimension can be derived, see e.g.  Li \cite[Appendix B]{Li4}. Recall that the classical Almansi expression (see e.g. \cite[Proposition 1.3]{A-C-L}) shows that a biharmonic function on $\mathbb{R}^n$ can be written uniquely as 
	\[u(x)=h_1(x)+r^2(x)h_2(x),\]
	where $h_1(x)$ and $h_2(x)$ are harmonic functions and $r(x)$ is the Euclidean distance function on $\mathbb{R}^n$. Then one can conclude from the Almansi expression, the analysis in \cite[Appendix B]{Li4} and the result of Martinazzi \cite[Theorem 5]{Martinazzi} to obtain that biharmonic functions on $\mathbb{R}^n$ with polynomial growth of fixed rate $d \ge 0$ must be polynomials with degree at most $d$ and then the space consists of such functions is also of finite dimension with an upper bound of order $n-1$ with respect to the growth rate $d$ for $d \ge 1$. In 2007, Chen-Wang \cite{Chen-Wang} considered a class of $2m$-order ($m \ge 1$) linear partial differential equations on $\mathbb{R}^n$ and proved that the order of the dimension upper bound for the space of solutions with polynomial growth is $2mn$ if the equation is in nondivergence form and is $mn$ if in divergence form.
	
	In the past fifty years, however, relatively little is known about the framework of the space of biharmonic functions on general open manifolds. It is natural to impose the following extension of Yau's conjecture to the framework of biharmonic functions:
	
	\begin{conjecture2}
		For an open manifold with nonnegative Ricci curvature, the space of biharmonic functions with polynomial growth of a fixed rate is finite dimensional.
	\end{conjecture2}
	
	In the present paper, we shall firstly prove that any bounded biharmonic function on open manifolds with nonnegative Ricci curvature is constant. Then, we provide a resolution to the generalized Yau's conjecture and obtain a Weyl type bound of the space of biharmonic functions with polynomial growth on open manifolds.

	For a fixed point $p \in M$, define $\mathcal{H}^2_{p,d}(M)$ to be the linear space consists of biharmonic functions on $M$ with polynomial growth of rate $d \ge 0$ with respect to $p$, that is,
	\[\mathcal{H}^2_{p,d}(M)=\left\{u \in W^{2,2}_{loc}(M, \mathbb{R}) : \ \Delta^2u=0, \,|u|(x) \le C\left(1+r^d(x)\right)\right\},\] 
	where $r(x)$ is the distance function on $M$ from $p$. Denote by $B_r(p)$ the geodesic ball of radius $r$ centered at $p$, and we always omit the symbol of volume form in integrals for simplicity. Before stating our results, we recall some definitions on complete Riemannian manifolds:
	
	\begin{definition}\label{quadratic decay}
		A complete manifold $M^n$ is said to have asymptotically nonnegative Ricci curvature of quadratic decay at $p \in M$ if
		\begin{equation*}
			{\rm{Ric}}_x(\cdot,\cdot) \ge -(n-1) \frac{\kappa^2}{1+r^2(x)}\left<\cdot,\cdot\right>
		\end{equation*}
		holds for all $x \in M$ in the sense of quadratic forms with some constant $\kappa \ge 0$, where $r(x)$ is the distance from $x$ to $p$. 
	\end{definition}
	
	\begin{definition}\label{Laplacian}
		A complete manifold $M^n$ is said to have the Laplacian cut-off property at $p \in M$ if for all $r>0$ and $\alpha>1$, there exists a family of cut-off functions
		\[\varphi_r:M \to [0,1],\quad \varphi_r \in C_c^{\infty}(M)\]
		satisfying
		\[\varphi_r =1\ {\rm{on}}\ B_r(p),\quad {\rm{supp}}\,\varphi_r \subset B_{\alpha r}(p), \quad |\nabla \varphi_r| \le \frac{C_1}{r}\quad {\rm{and}}\quad |\Delta \varphi_r| \le \frac{C_2}{r^2}\]
		with constants $0<C_1,C_2<\infty$ independent of $r$. We say $M$ has the Laplacian cut-off property if it has the Laplacian cut-off property at every $x \in M$.
	\end{definition}
	
	\begin{remark}\label{RK1-1}
		Open manifolds with nonnegative Ricci curvature have the Laplacian cut-off property, see e.g. works by G\"uneysu \cite[Theorem 2.2]{Guneysu} and Bianchi-Setti \cite[Corollary 2.3]{Bianchi-Setti}. In fact, this is based on a highly nontrivial result from Riemannian rigidity theory, see e.g. Schoen-Yau \cite{Schoen-Yau1994}, Cheeger-Colding \cite{Cheeger-Colding}, Cheeger \cite{Cheeger}, and Wang-Zhu \cite{Wang-Zhu}. More generally, Bianchi-Setti \cite{Bianchi-Setti} proved that if $M$ has asymptotically nonnegative Ricci curvature of quadratic decay at $p \in M$, then it has the Laplacian cut-off property at $p$. For more explorations of open manifolds with asymptotically nonnegative Ricci curvature, see e.g. works by Zhu \cite{Zhu} and Pigola-Rigoli-Setti \cite{PRS}.
	\end{remark}
	
	\begin{definition}\label{doub-property}
		A complete manifold $M^n$ is said to have the doubling property if there exists a constant $0< C_D <\infty$ such that for all $p \in M$ and $r > 0$,
		\begin{equation}\label{doub-inequal}
			{\rm{Vol}}\left(B_{2r}(p)\right) \le C_D\,{\rm{Vol}}\left(B_r(p)\right),
		\end{equation}
		where ${\rm{Vol}}(B_r(p))$ is the volume of $B_r(p)$ with respect to the metric on the manifold.
	\end{definition}
	
	\begin{remark}\label{RK1-2}
		For any $p \in M$ and $r>0$, the doubling property implies a volume estimate for geodesic balls on $M$ as
		\[{\rm{Vol}}\left(B_r(p)\right) \le C_D {\rm{Vol}}\left(B_1(p)\right) \left(1+r^{\log_2 C_D}\right)\]
	\end{remark} 
	
	\begin{definition}\label{UNP}
		A complete manifold $M^n$ is said to satisfy the uniform Neumann-Poincar\'e inequality if there exists a constant $0< C_N < \infty$ such that for all $p \in M$, $r>0$ and $u \in W^{1,2}_{loc}(M)$,
		\begin{equation}\label{UNP-inequal}
			\int_{B_r(p)}(u-\overline{u})^2 \le C_N\,r^2\int_{B_r(p)}|\nabla u|^2,
		\end{equation}
		where $W^{1,2}_{loc}(M)$ is the local $(1,2)$-Sobolev space on $M$ and
		\begin{equation*}
			\overline{u}=\dfrac{\int_{B_r(p)} u}{{\rm{Vol}}\left(B_r(p)\right)}
		\end{equation*}
	\end{definition}
	
	\begin{remark}\label{RK1-3}
		 If $M^n$ is a complete manifold with nonnegative Ricci curvature, $M$ has the doubling property with constant $C_D = 2^n$ by the classical relative volume comparison theorem. Moreover, Buser\cite{Buser} showed that such manifold also satisfies the uniform Neumann-Poincar\'e inequality as an application of the inequality about isoperimetric constant.
	\end{remark}

	Our first result is a Liouville type theorem for biharmonic functions on open manifold, which is a natural generalization of the classical results of biharmonic function on $\mathbb{R}^n$ by Nicolesco \cite{Nicolesco1} and harmonic functions on open manifolds  \cite{Cheng}:
	\begin{theorem}\label{Biharmonic Liouville Theorem}
		Let $M^n$ be an open manifold with nonnegative Ricci curvature. Then any nonconstant biharmonic function on $M$ must grow at least linearly.
	\end{theorem}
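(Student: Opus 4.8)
The plan is to argue by contraposition: I will show that a biharmonic function $u$ on $M$ that grows strictly slower than linearly must be constant. Concretely, assume $|u|(x)\le C\left(1+r^d(x)\right)$ for some $0\le d<1$ (the general sublinear case is identical). The organizing observation is that $f:=\Delta u$ is harmonic, since $\Delta f=\Delta^2 u=0$ and $u$, being a solution of the elliptic equation $\Delta^2 u=0$, is smooth. Thus it suffices to prove $f\equiv 0$: once $u$ is harmonic with sublinear growth, Cheng's theorem \cite{Cheng} forces $u$ to be constant.

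The analytic heart is a Caccioppoli (reverse Poincar\'e) inequality for biharmonic functions,
\[
\int_{B_r(p)}(\Delta u)^2\le \frac{C}{r^4}\int_{B_{2r}(p)}u^2 .
\]
To obtain it I test $\Delta^2 u=0$ against $u\varphi^4$, where $\varphi=\varphi_r$ is a Laplacian cut-off function (Definition \ref{Laplacian}), available because $M$ has nonnegative Ricci curvature (Remark \ref{RK1-1}). Integrating by parts twice gives
\[
\int(\Delta u)^2\varphi^4=-2\int \Delta u\,\nabla u\cdot\nabla(\varphi^4)-\int u\,\Delta u\,\Delta(\varphi^4),
\]
and, after Cauchy--Schwarz and Young's inequality, the right-hand side is bounded by $C\int|\nabla u|^2|\nabla\varphi|^2+Cr^{-4}\int_{B_{2r}}u^2$. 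The gradient term is then reduced by an intermediate estimate: testing $\Delta u$ against $u$ times a smooth weight supported in the annulus $B_{2r}(p)\setminus B_r(p)$ with the scaling of $|\nabla\varphi|^2$ controls $\int|\nabla u|^2|\nabla\varphi|^2$ by $r^{-2}\big(\int_{B_{2r}}(\Delta u)^2\big)^{1/2}\big(\int_{B_{2r}}u^2\big)^{1/2}+r^{-4}\int_{B_{2r}}u^2$.

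Writing $F(r)=\int_{B_r(p)}(\Delta u)^2$ and $U(r)=\int_{B_r(p)}u^2$, the two estimates combine, via Young's inequality with a free parameter $\delta$, into $F(r)\le \delta F(2r)+C_\delta\, r^{-4}U(2r)$. Choosing $\delta$ so that $2^n\delta=\tfrac14$ and dividing by ${\rm Vol}(B_r(p))$, the volume doubling property (Remark \ref{RK1-3}) turns this into
\[
\Phi(r)\le \tfrac14\,\Phi(2r)+C\,r^{2d-4},\qquad \Phi(r):=\frac{1}{{\rm Vol}(B_r(p))}\int_{B_r(p)}(\Delta u)^2 ,
\]
where I used $U(2r)\le C\,r^{2d}\,{\rm Vol}(B_r(p))$. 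Iterating along the dyadic radii $2^k r$ and discarding the boundary term at infinity (legitimate once $\Phi$ is known to grow at most polynomially, which itself follows by feeding the growth of $u$ into the same family of integral estimates) yields $\Phi(r)\le C\,r^{2d-4}\to 0$ as $r\to\infty$, since $d<1$.

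Finally, $(\Delta u)^2$ is subharmonic because $\Delta\big((\Delta u)^2\big)=2|\nabla \Delta u|^2\ge 0$, so the mean value inequality for nonnegative subharmonic functions---valid on $M$ by volume doubling together with the Neumann--Poincar\'e inequality (Remark \ref{RK1-3})---gives, for every $x\in M$ and all $\rho>0$,
\[
(\Delta u)^2(x)\le \frac{C}{{\rm Vol}(B_\rho(x))}\int_{B_\rho(x)}(\Delta u)^2\le C\,\Phi\big(\rho+r(x)\big),
\]
which tends to $0$ as $\rho\to\infty$. Hence $\Delta u\equiv 0$, so $u$ is harmonic, and Cheng's theorem closes the argument. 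I expect the main obstacle to be the Caccioppoli inequality itself, specifically closing the estimate for $\int|\nabla u|^2|\nabla\varphi|^2$: this term reintroduces $\int(\Delta u)^2$ over the larger ball, so absorbing it demands a hole-filling/iteration whose convergence must be reconciled with the volume doubling factor $2^n$ (whence the precise choice of $\delta$), together with the a priori polynomial growth of $\Phi$ needed to pass to the limit. Because nonnegative Ricci curvature does not supply uniform local elliptic estimates, every bound in this step must be integral in nature.
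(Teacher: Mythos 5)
Your proposal is correct and shares the paper's overall skeleton: test $\Delta^2u=0$ against $u\varphi^4$ with a Laplacian cut-off, reduce the cross term $\int\varphi^2|\nabla\varphi|^2|\nabla u|^2$ by a second integration by parts against $u$, conclude $\Delta u\equiv 0$ from a mean value inequality, and close with Cheng's theorem (your subharmonicity of $(\Delta u)^2$ and the paper's application of Li--Schoen to the harmonic function $\Delta u$ are the same device, since Theorem \ref{LS} is stated exactly for nonnegative subharmonic functions). The genuine divergence is in the middle, and it is worth spelling out. The paper never proves your scaling-sharp $r^{-4}$ Caccioppoli inequality: in Lemma \ref{RP-inqual} the Young splitting in the gradient estimate is chosen so that the $(\Delta u)^2$ term reappears with the \emph{same} weight $\varphi^4$, and the dangerous term enters the main inequality with prefactor $C/r^2$; for $r>R_0$ it is absorbed into the left-hand side in a single pass, giving the weaker bound $\int_{B_r}|\Delta u|^2\le C r^{-2}\int_{B_{\alpha r}}u^2$ --- which already yields $\sup_{B_r}|\Delta u|^2\le C r^{2s-2}\to 0$ for $s<1$, with no iteration, no doubling constant, and no division by volume. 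Your hole-filling recursion $F(r)\le\delta F(2r)+C_\delta r^{-4}U(2r)$ is sound and buys the sharp exponent, but note the near-circularity you half-acknowledge: discarding the term $4^{-k}\Phi(2^k r)$ at infinity requires an a priori bound $\Phi(R)=o(R^2)$, and the natural way ``the same family of integral estimates'' delivers one is precisely the paper's one-pass absorption, i.e.\ $\Phi(R)\le CR^{2d-2}$ --- at which point the theorem follows immediately and the entire dyadic iteration is redundant for the Liouville statement (it would matter only if the scaling-sharp inequality were itself the goal; it is also what makes the paper's lemma reusable verbatim for Theorems \ref{mainThm} and \ref{upperbound}, where the $r^{-2}$ decay exactly offsets the growth bookkeeping). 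Two smaller points to tighten: in your final mean value step the ratio $\mathrm{Vol}\left(B_{\rho+r(x)}(p)\right)/\mathrm{Vol}\left(B_\rho(x)\right)$ should be observed to remain bounded as $\rho\to\infty$ for fixed $x$ (Bishop--Gromov), and the paper establishes its key estimate under the weaker hypothesis of asymptotically nonnegative Ricci curvature of quadratic decay via the Bianchi--Setti cut-offs, a generality your argument could inherit at no extra cost.
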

	
	Next, we state the following finite dimensional result for biharmonic functions on open manifold:
	\begin{theorem}\label{mainThm}
		Let $M^n$ be an open manifold with asymptotically nonnegative Ricci curvature of quadratic decay at $p \in M$ with constant $\kappa \ge 0$, which satisfies the uniform Neumann-Poincar\'e inequality with constant $C_N$ and has the doubling property with constant $C_D$. Then for $d \ge 0$, the space $\mathcal{H}^2_{p,d}(M)$ is finite dimensional, which is bounded by a constant that depends on $n$, $\kappa$, $d$, $C_D$ and $C_N$.
	\end{theorem}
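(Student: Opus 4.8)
The plan is to reduce the fourth-order problem to the second-order (harmonic) case through the linear map $\Delta$. Since $u\in\mathcal{H}^2_{p,d}(M)$ means $\Delta^2 u=0$, the function $\Delta u$ is harmonic, so $\Delta$ restricts to a linear map
\[
\Phi:\mathcal{H}^2_{p,d}(M)\longrightarrow \{\text{harmonic functions on }M\},\qquad \Phi(u)=\Delta u.
\]
Its kernel is exactly the space of harmonic functions lying in $\mathcal{H}^2_{p,d}(M)$, i.e. harmonic functions of polynomial growth of rate $\le d$, which I denote $\mathcal{H}_{p,d}(M)$. By the rank-nullity theorem,
\[
\dim\mathcal{H}^2_{p,d}(M)=\dim\ker\Phi+\dim\operatorname{Im}\Phi\le \dim\mathcal{H}_{p,d}(M)+\dim\mathcal{H}_{p,d'}(M),
\]
provided one shows $\operatorname{Im}\Phi\subseteq\mathcal{H}_{p,d'}(M)$ for some rate $d'$ depending only on $n,d,C_D$. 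Thus everything is reduced to the finite dimensionality of harmonic functions of polynomial growth, together with a growth estimate for $\Delta u$.

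For the kernel and the image I would invoke the resolution of Yau's conjecture in the abstract form that requires only the doubling property and the uniform Neumann-Poincar\'e inequality: under exactly these two hypotheses the Colding-Minicozzi counting machinery (\cite{Colding-Minicozzi1,Colding-Minicozzi3}) yields $\dim\mathcal{H}_{p,s}(M)\le C(C_D,C_N)\,s^{\mu}$ for every $s\ge 1$, with $\mu=\mu(C_D)$. Since here $M$ has doubling constant $C_D$ and satisfies the uniform Neumann-Poincar\'e inequality with constant $C_N$, this applies verbatim to both $\mathcal{H}_{p,d}(M)$ and $\mathcal{H}_{p,d'}(M)$, so it remains only to control the growth of the image.

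The heart of the matter is the growth estimate $\operatorname{Im}\Phi\subseteq\mathcal{H}_{p,d'}(M)$. First, the sup-norm bound on $u$ together with the volume estimate of Remark \ref{RK1-2} give $\int_{B_r(p)}u^2\le C\,r^{2d+\log_2 C_D}$. Next I would establish Caccioppoli-type (reverse Poincar\'e) inequalities
\[
\int_{B_r(p)}|\nabla u|^2\le \frac{C}{r^2}\int_{B_{2r}(p)}u^2,\qquad \int_{B_r(p)}(\Delta u)^2\le \frac{C}{r^4}\int_{B_{2r}(p)}u^2,
\]
by testing $\Delta^2u=0$ against $u\varphi^4$ and integrating by parts twice, where $\varphi=\varphi_r$ is a Laplacian cut-off function as in Definition \ref{Laplacian}. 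It is precisely here that the asymptotically nonnegative Ricci curvature of quadratic decay enters (via Remark \ref{RK1-1}), since the integration by parts produces terms weighted by $\Delta(\varphi^4)$, which are controlled by $|\Delta\varphi|\le C_2/r^2$ and $|\nabla\varphi|\le C_1/r$; this is how the constant $\kappa$ appears in the final bound. Since $\Delta u$ is harmonic, the doubling property and the Neumann-Poincar\'e inequality give, by Moser iteration, the mean value inequality $\sup_{B_{r/2}(p)}(\Delta u)^2\le \frac{C}{{\rm{Vol}}(B_r(p))}\int_{B_r(p)}(\Delta u)^2$; combined with the Caccioppoli bound and ${\rm{Vol}}(B_r(p))\ge {\rm{Vol}}(B_1(p))$, this yields $\sup_{B_{r/2}(p)}|\Delta u|\le C\,r^{d'}$ with $d'=\max\!\bigl(d-2+\tfrac12\log_2 C_D,\,0\bigr)$, so $\Delta u\in\mathcal{H}_{p,d'}(M)$ as required.

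The main obstacle is the closure of the two Caccioppoli estimates: the second-order and fourth-order reverse Poincar\'e inequalities are coupled through the cross term $\int\varphi^2 u\,\Delta u$, and a naive use of Cauchy-Schwarz and Young's inequality leaves a borderline term $C\int_{B_{2r}(p)}(\Delta u)^2$ whose constant is not small. I would resolve this by a hole-filling/iteration argument over dyadic radii: pushing the cutoff-derivative terms onto the annulus $B_{2r}(p)\setminus B_r(p)$ produces an inequality of the form $\int_{B_r}(\Delta u)^2\le\varepsilon\int_{B_{4r}}(\Delta u)^2+C_\varepsilon\,r^{-4}\int_{B_{4r}}u^2$, and choosing $\varepsilon$ small relative to the polynomial growth rate of $\int u^2$ lets the resulting geometric series be summed, after a crude a priori finiteness bound for $\int_{B_r}(\Delta u)^2$ from interior elliptic regularity for the smooth biharmonic function $u$. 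Assembling the pieces gives $\dim\mathcal{H}^2_{p,d}(M)\le C(C_D,C_N)\bigl(d^{\mu}+(d')^{\mu}\bigr)$ with $d'=d'(n,d,C_D)$, so the bound depends only on $n,\kappa,d,C_D,C_N$, as asserted.
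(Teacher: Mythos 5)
Your architecture coincides with the paper's: the linear map $\Phi(u)=\Delta u$, rank--nullity with $\ker\Phi=\mathcal{H}_{p,d}(M)$, the Colding--Minicozzi bound (Theorem \ref{CM1}) for the kernel, and a Caccioppoli estimate for $\Delta u$ obtained by testing $\Delta^2u=0$ against $\varphi^4u$ with the Laplacian cut-offs supplied by Bianchi--Setti (this is the paper's Lemma \ref{RP-inqual}). The one place you deviate is where your proof has a genuine gap, and ironically it is a repair for a problem that does not occur. The ``borderline term $C\int_{B_{2r}}(\Delta u)^2$ whose constant is not small'' never arises: after integrating by parts, the only copy of $\int\varphi^4|\Delta u|^2$ produced on the right-hand side enters through the gradient term and carries the prefactor $C/r^2$, so it is absorbed into the left for all $r\ge R_0$ with $R_0$ depending only on the cut-off constants; this is exactly how the paper closes \eqref{EQ2-2} and \eqref{EQ2-3}, yielding $\int_{B_r}|\Delta u|^2\le Cr^{-2}\int_{B_{\alpha r}}u^2$, which already suffices for the theorem (it merely shifts your $d'$ by $1$). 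Even your sharper $r^{-4}$ inequality needs no iteration: keep the cross term in Cauchy--Schwarz form and apply Young only at the very end. Setting $A=\int\varphi^4|\Delta u|^2$, $G=\int\varphi^2|\nabla u|^2$ and $U=\int_{B_{\alpha r}}u^2$, one has
\begin{equation*}
A\le \frac{C}{r^4}U+\frac{C}{r^2}G,\qquad G\le 2A^{\frac12}U^{\frac12}+\frac{C}{r^2}U,\qquad\text{hence}\qquad A\le \frac{C'}{r^4}U+\frac{2C}{r^2}A^{\frac12}U^{\frac12}\le \frac12A+\frac{C''}{r^4}U,
\end{equation*}
valid for every $r>0$.

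The gap itself: as written, your dyadic scheme $f(r)\le\varepsilon f(4r)+C_\varepsilon r^{-4}U(4r)$ cannot be closed by ``a crude a priori finiteness bound.'' After $k$ iterations the tail is $\varepsilon^kf(4^kr)$, and to make it vanish you must know \emph{in advance} that $f(R)=\int_{B_R}|\Delta u|^2$ grows at most polynomially in $R$ with a known exponent $m$, so that $\varepsilon$ can be chosen with $\varepsilon\,4^m<1$; mere finiteness of $f$ on each ball --- which is all that interior regularity of the smooth biharmonic function gives --- is compatible with arbitrarily fast growth in $R$, in which case the tail does not tend to zero. The natural source of the missing a priori polynomial bound is precisely the absorption argument above, which renders the iteration superfluous. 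Two smaller remarks: your upgrade of the $L^2$ bound to pointwise growth $d'$ and the black-box application of the harmonic dimension bound is a valid variant, but the paper avoids this extra layer by feeding the $L^2$-growth class $\int_{B_r}f^2\le C(1+r^{2d+\log_2C_D-2})$ directly into the proof of Theorem \ref{CM1}; and if you do go pointwise, your appeal to Moser iteration from doubling plus the Neumann--Poincar\'e inequality is the right one --- the Li--Schoen inequality (Theorem \ref{LS}) would \emph{not} suffice when $\kappa>0$, since its constant $\tau^{-C(1+\sqrt{K}r)}$ is exponential in $r$ and would destroy polynomial growth.
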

	
	Consequently, by Remark \ref{RK1-1} and Remark \ref{RK1-3}, we give an affirmative answer to the generalized Yau's conjecture.
	\begin{theorem}\label{specialThm}
		For any open manifold $M^n$ with nonnegative Ricci curvature, the space $\mathcal{H}^2_{p,d}(M)$ is finite dimensional for all $p \in M$ and $d \ge 0$.
	\end{theorem}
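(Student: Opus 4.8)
The plan is to obtain Theorem~\ref{specialThm} as a direct corollary of Theorem~\ref{mainThm}, by verifying that every open manifold with nonnegative Ricci curvature satisfies all three structural hypotheses required there: asymptotically nonnegative Ricci curvature of quadratic decay, the uniform Neumann-Poincar\'e inequality, and the doubling property. The guiding observation is that nonnegative Ricci curvature is the most favorable special case of each of these conditions, so no new analysis beyond the classical comparison geometry recalled in Remark~\ref{RK1-3} is needed.

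First I would check the curvature hypothesis with $\kappa = 0$: if $\mathrm{Ric} \ge 0$, then for every $x \in M$,
\[
\mathrm{Ric}_x(\cdot,\cdot) \ge 0 = -(n-1)\frac{0^2}{1+r^2(x)}\langle\cdot,\cdot\rangle,
\]
so $M$ has asymptotically nonnegative Ricci curvature of quadratic decay at $p$ in the sense of Definition~\ref{quadratic decay} with constant $\kappa = 0$. Next I would invoke Remark~\ref{RK1-3} twice: by the relative volume comparison (Bishop--Gromov) theorem, $M$ enjoys the doubling property of Definition~\ref{doub-property} with the explicit constant $C_D = 2^n$; and by Buser's inequality, $M$ satisfies the uniform Neumann-Poincar\'e inequality of Definition~\ref{UNP} with some finite constant $C_N$ depending only on $n$. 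With $\kappa = 0$, $C_D = 2^n$, and this $C_N$ in hand, all hypotheses of Theorem~\ref{mainThm} hold, and that theorem immediately yields that $\mathcal{H}^2_{p,d}(M)$ is finite dimensional for every $p \in M$ and every $d \ge 0$, the bound now depending only on $n$ and $d$.

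Since the argument consists entirely of verifying hypotheses that are already established in the literature, I do not expect a genuine obstacle here; the full analytic difficulty has been absorbed into Theorem~\ref{mainThm}. The only point demanding a moment of care is that Definition~\ref{quadratic decay} is phrased with a parameter $\kappa \ge 0$, and one must confirm that the degenerate value $\kappa = 0$ is admissible---which it is, and which simply recovers the genuinely nonnegative curvature case. Finally, the Laplacian cut-off property of Remark~\ref{RK1-1}, should it be needed inside the proof of Theorem~\ref{mainThm}, is likewise guaranteed, either directly from nonnegative Ricci curvature or from the quadratic-decay hypothesis via the result of Bianchi--Setti cited there.
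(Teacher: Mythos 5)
Your proposal is correct and follows exactly the paper's route: the authors deduce Theorem~\ref{specialThm} from Theorem~\ref{mainThm} by citing Remark~\ref{RK1-1} and Remark~\ref{RK1-3}, i.e., nonnegative Ricci curvature gives the quadratic-decay hypothesis with $\kappa=0$, the doubling property with $C_D=2^n$, the uniform Neumann--Poincar\'e inequality via Buser, and the Laplacian cut-off property. Your verification of these hypotheses, including the observation that $\kappa=0$ is admissible in Definition~\ref{quadratic decay}, matches the paper's (one-line) argument precisely.
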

	
	Furthermore, we derive a Weyl type bound for the dimension of the space of biharmonic functions with polynomial growth on open manifolds with nonnegative Ricci curvature.
	\begin{theorem}\label{upperbound}
		Let $M^n$ be an open manifold with nonnegative Ricci curvature, then for any $p \in M$, we have
		\[\dim \mathcal{H}_{p,d}^2(M)=1\]
		for $0\le d<1$ and
		\[\dim \mathcal{H}_{p,d}^2(M) \le C d^{n-1}\]
		for $d \ge 1$, where $0<C = C(n) <\infty$ is a constant.
	\end{theorem}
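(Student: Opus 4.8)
The plan is to dispose of the sublinear range with the Liouville theorem and, in the essential range $d\ge 1$, to reduce matters to the known sharp Weyl bound for harmonic functions through the Laplacian $u\mapsto\Delta u$. For $0\le d<1$, Theorem \ref{Biharmonic Liouville Theorem} asserts that any nonconstant biharmonic function grows at least linearly, so a biharmonic function of growth rate $d<1$ must be constant. Since the constants already form a one-dimensional subspace of $\mathcal{H}^2_{p,d}(M)$, we obtain $\dim\mathcal{H}^2_{p,d}(M)=1$.

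Assume now $d\ge 1$. The mechanism rests on the fact that for biharmonic $u$ the function $v:=\Delta u$ is harmonic, so $v$ inherits the rigidity of harmonic functions under $\mathrm{Ric}\ge 0$. The first step is to establish a reverse Poincar\'e (Caccioppoli) inequality for the fourth-order equation,
\begin{equation*}
\int_{B_r(p)}|\Delta u|^2\le \frac{C(n)}{r^4}\int_{B_{2r}(p)}u^2,
\end{equation*}
obtained by testing $\Delta^2u=0$ against $\varphi_r^2u$, integrating by parts twice, and absorbing the resulting cross terms with the aid of an intermediate gradient estimate; here $\varphi_r$ is a Laplacian cut-off function, available by Remark \ref{RK1-1}, whose bounds $|\nabla\varphi_r|\le C_1/r$ and $|\Delta\varphi_r|\le C_2/r^2$ produce exactly the factor $r^{-4}$ (the scaling is confirmed by the model $u=r^2$ on $\mathbb{R}^n$). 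Combining this with the mean value inequality for the harmonic function $v$ and the volume doubling $\mathrm{Vol}(B_{2r})\le 2^n\,\mathrm{Vol}(B_r)$ from Remark \ref{RK1-3}, one deduces
\begin{equation*}
\sup_{B_r(p)}|\Delta u|^2\le \frac{C}{\mathrm{Vol}(B_{2r}(p))}\int_{B_{2r}(p)}|\Delta u|^2\le \frac{C}{r^4}\bigl(1+r^{2d}\bigr),
\end{equation*}
so that $\Delta u$ is a harmonic function of polynomial growth of rate at most $d$ (indeed at most $d-2$).

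Consequently $\Delta$ defines a linear map from $\mathcal{H}^2_{p,d}(M)$ into $\mathcal{H}_{p,d}(M)$, the space of harmonic functions on $M$ of polynomial growth of rate $d$. Its kernel is precisely $\mathcal{H}_{p,d}(M)$, since a biharmonic function with $\Delta u=0$ is harmonic of the same growth, and its image is likewise contained in $\mathcal{H}_{p,d}(M)$. Because $\mathcal{H}^2_{p,d}(M)$ is finite dimensional by Theorem \ref{specialThm}, the rank-nullity theorem together with the sharp Weyl bound $\dim\mathcal{H}_{p,d}(M)\le C(n)\,d^{n-1}$ of Colding--Minicozzi \cite{Colding-Minicozzi3} yields
\begin{equation*}
\dim\mathcal{H}^2_{p,d}(M)=\dim\ker\Delta+\mathrm{rank}\,\Delta\le 2\,\dim\mathcal{H}_{p,d}(M)\le C(n)\,d^{n-1},
\end{equation*}
which is the claimed bound.

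The main obstacle is the reverse Poincar\'e step and the attendant verification that $\Delta u$ grows polynomially with a constant depending only on $n$. Unlike the harmonic case there is no maximum principle for $u$ itself, so all pointwise control of $\Delta u$ must be routed through the harmonicity of $v=\Delta u$ and the Caccioppoli estimate, and one must carefully check that the Laplacian cut-off constants, the doubling constant $2^n$, and Buser's Neumann--Poincar\'e constant (Remark \ref{RK1-3}) depend only on $n$; any spurious dependence on $d$ would corrupt the exponent $n-1$.
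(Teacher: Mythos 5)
Your proposal is correct and follows essentially the same architecture as the paper's proof: Theorem \ref{Biharmonic Liouville Theorem} handles $0\le d<1$; for $d\ge 1$ you use the linear map $u\mapsto\Delta u$, a Caccioppoli-type estimate plus the Li--Schoen mean value inequality (Theorem \ref{LS}) and Bishop--Gromov doubling to show $\Delta u$ has polynomial growth, and then rank-nullity together with the Colding--Minicozzi bound of Theorem \ref{CM2}. The one genuine divergence is quantitative: you assert the reverse Poincar\'e inequality with decay $r^{-4}$, whereas the paper's Lemma \ref{RP-inqual} proves only $r^{-2}$, namely $r^2\int_{B_r(p)}|\Delta u|^2\le C\int_{B_{\alpha r}(p)}u^2$. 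Your stronger $r^{-4}$ version is in fact true, but it does not come out of the absorption as you sketch it: with a fixed Young parameter, the intermediate gradient estimate produces the bare term $\int_{B_{\alpha r}(p)}u^2$ (from $\varphi^2|u||\Delta u|\le\tfrac12\varphi^4|\Delta u|^2+\tfrac12 u^2$), which after substitution carries only the prefactor $r^{-2}$; to reach $r^{-4}$ one must take the Young parameter of size $\epsilon\sim r^2$, i.e. $\varphi^2|u||\Delta u|\le\epsilon\,\varphi^4|\Delta u|^2+\epsilon^{-1}u^2$, and absorb the resulting $\tfrac{C\epsilon}{r^2}\int\varphi^4|\Delta u|^2$ afterwards (also, testing against $\varphi^2u$ rather than $\varphi^4u$ leaves the cross term $\int|\nabla\varphi|^2|\nabla u|^2$ undominated by $\int\varphi^2|\nabla u|^2$, so you need the fourth power or nested cutoffs, as the paper does). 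Fortunately your argument is insensitive to this detail: the paper's $r^{-2}$ estimate already yields $\sup_{B_r(p)}|\Delta u|^2\le C r^{2d-2}$, so the image of $\Delta$ lies in $\mathcal{H}_{p,d}(M)$ (indeed in $\mathcal{H}_{p,d-1}(M)$), which is all your rank-nullity count requires. In the final count you bound the image crudely by $\dim\mathcal{H}_{p,d}(M)$, giving $2\dim\mathcal{H}_{p,d}(M)$, while the paper refines the image bound, showing $\dim\widetilde{\mathcal{H}}^2_{p,d}(M)=1$ for $1\le d<2$ via Cheng's theorem and $\dim\widetilde{\mathcal{H}}^2_{p,d}(M)\le C(d-1)^{n-1}$ for $d\ge 2$ via the proof of Theorem \ref{CM2}; both routes land on $C(n)\,d^{n-1}$. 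Two small remarks: your $r^{-4}$ estimate, once properly derived, actually forces $\Delta u\equiv 0$ whenever $d<2$, slightly sharpening the paper's image analysis in that range; and the appeal to Theorem \ref{specialThm} for finite dimensionality is dispensable, since a linear map with finite-dimensional kernel and finite-dimensional image already has finite-dimensional domain.
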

	\begin{remark}
		As discussed before, in the case of Euclidean spaces, for any $p \in \mathbb{R}^n$ and $d \ge 1$, the upper bound for $\dim\mathcal{H}^2_{p,d}(\mathbb{R}^n)$ is $C d^{n-1}$. This indicates that the rate of growth given in Theorem \ref{upperbound} is  sharp compared to the Euclidean case.
	\end{remark}
	
	Finally, we present a finite dimensional result for a class of fourth-order partial differential operators on $\mathbb{R}^n$  satisfying certain coefficient conditions.
	
	\begin{theorem}\label{EDF1}
		Let $\left\{a_{ij}(x)\right\}_{i,j=1}^n$ be measurable functions on $\mathbb{R}^n$  and consider the operator $\mathcal{L}$ defined by
		\begin{equation}\label{EQ4-1}
			\mathcal{L}u:=\sum\limits_{i,j,k,l=1}^n \frac{\partial}{\partial x_k}\left(a_{kl}(x)\frac{\partial^2}{\partial x_l\partial x_i}\left(a_{ij}(x)\frac{\partial}{\partial x_j} u\right)\right).
		\end{equation}
		Assume that $a_{ij}(x) \in C^1\left(\mathbb{R}^n\backslash \overline{B_R}(0)\right)$ for some $R>0$ and there are constants $c_i>0$ ($i=1,\cdots,4$) such that
		\begin{equation}\label{EQ4-2}
			a_{ij}(x)=a_{ji}(x),\quad c_1\,|\xi|^2 \le \sum\limits_{i,j=1}^n a_{ij}(x)\,\xi_i\,\xi_j  \le c_2\,|\xi|^2
		\end{equation}
		for any $\xi=(\xi_1,\cdots,\xi_n) \in \mathbb{R}^n$ and
		\begin{equation}\label{EQ4-3}
			\left|a_{ij}(x)\right| \le c_3, \quad \left|\sum\limits_{i,j=1}^n\partial_i\, a_{ij}(x)\right| \le \frac{c_4}{|x|}
		\end{equation}
		for $|x| > R$. Then for all $d\ge 0$, there is a constant $0<C<\infty$ depending only on $n$, $d$ and $\left\{c_i\right\}_{i=1}^4$ such that
		\begin{equation*}
			\dim\,\mathcal{H}^2_{d}(\mathbb{R}^n,\mathcal{L}) \le C,
		\end{equation*}
		where $\mathcal{H}^2_{d}(\mathbb{R}^n,\mathcal{L})$ is the space of weak solutions to $\mathcal{L} u=0$ with polynomial growth of rate $d$ with respect to the original point $0 \in \mathbb{R}^n$. 
	\end{theorem}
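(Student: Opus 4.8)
The plan is to recognize that the fourth-order operator $\mathcal{L}$ is the square of a second-order divergence-form elliptic operator, and then to transport the proof of Theorem \ref{mainThm} to this setting once the relevant structural hypotheses are verified for that second-order operator. Setting
\[Lu := \sum_{i,j=1}^n \partial_i\big(a_{ij}(x)\,\partial_j u\big),\]
a direct regrouping of the four sums in \eqref{EQ4-1} shows that $\mathcal{L}u = L(Lu)=L^2u$: the inner pair produces $Lu$, and the outer pair applies $L$ again. Hence a weak solution of $\mathcal{L}u=0$ is exactly an ``$L$-biharmonic'' function, and $w:=Lu$ is weakly $L$-harmonic. This is the precise analog of the factorization $\Delta^2=\Delta\circ\Delta$, with the uniformly elliptic operator $L$ playing the role of the Laplace--Beltrami operator.

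Next I would check that $L$, acting on $\mathbb{R}^n$ with Lebesgue measure and Euclidean balls, satisfies the abstract hypotheses underlying Theorem \ref{mainThm}. The symmetry in \eqref{EQ4-2} makes $L$ self-adjoint, and the ellipticity in \eqref{EQ4-2} renders the Dirichlet form $\mathcal{E}(u,v)=\int a_{ij}\,\partial_i u\,\partial_j v$ comparable to the Euclidean energy $\int|\nabla u|^2$. Consequently the doubling property (Definition \ref{doub-property}) holds trivially with $C_D=2^n$; the uniform Neumann--Poincar\'e inequality (Definition \ref{UNP}) for $\mathcal{E}$ follows from the classical Euclidean Neumann--Poincar\'e inequality together with the lower bound $c_1|\xi|^2\le a_{ij}\xi_i\xi_j$, so that $C_N$ depends only on $n$ and $c_1$; and since $L$ is uniformly elliptic with merely measurable coefficients, De Giorgi--Nash--Moser theory supplies the local boundedness, Harnack inequality, and mean value / sup--$L^2$ estimates for $L$-subsolutions that drive the Colding--Minicozzi dimension count. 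Finally, for standard Euclidean cut-offs $\varphi_r$ with $|\nabla\varphi_r|\le C/r$ and $|\nabla^2\varphi_r|\le C/r^2$ supported where $|x|\ge r$, one has $L\varphi_r=a_{ij}\,\partial_i\partial_j\varphi_r+\big(\sum_i\partial_i a_{ij}\big)\partial_j\varphi_r$, and for $r>R$ the bounds \eqref{EQ4-3} give $|L\varphi_r|\le c_2|\nabla^2\varphi_r|+(c_4/|x|)|\nabla\varphi_r|\le C/r^2$. Thus the divergence-decay condition $|\sum_i\partial_i a_{ij}|\le c_4/|x|$ plays exactly the role that asymptotically nonnegative Ricci curvature of quadratic decay (Definition \ref{quadratic decay}) plays in Theorem \ref{mainThm}: it furnishes the cut-off property of Definition \ref{Laplacian}, now for $L$.

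With these properties established I would re-run the argument of Theorem \ref{mainThm}, replacing $\Delta$ by $L$. The linear map $u\mapsto w=Lu$ sends $\mathcal{H}^2_d(\mathbb{R}^n,\mathcal{L})$ into the space of $L$-harmonic functions, and its kernel is the space of $L$-harmonic functions of growth $d$, which is finite dimensional by the Colding--Minicozzi estimate under (doubling)$+$(Neumann--Poincar\'e). For the image, the reverse/Caccioppoli estimate for $L$-biharmonic functions, obtained by integrating the weak equation against $\varphi_r^2$ and using the cut-off bounds above, yields $\int_{B_r}|Lu|^2\le C r^{-4}\int_{B_{2r}}u^2$; combined with the growth of $u$ this shows $w$ is $L$-harmonic of growth at most $d-2$, so the image is finite dimensional as well. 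Adding the two bounds gives $\dim\mathcal{H}^2_d(\mathbb{R}^n,\mathcal{L})<\infty$ with an explicit bound in terms of $n,d,C_D=2^n$ and $C_N=C_N(n,c_1)$, hence in terms of $n,d$ and $\{c_i\}_{i=1}^4$.

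The hard part will be carrying the energy estimates through the limited regularity permitted here. The cut-off computation and the integration by parts that moves $L$ onto test functions are valid only at scales $r>R$, where \eqref{EQ4-3} guarantees $a_{ij}\in C^1$ and controls $\sum_i\partial_i a_{ij}$; inside $\overline{B_R}$ the coefficients are merely measurable, and one must instead rely on the compactness of $\overline{B_R}$ and the De Giorgi--Nash--Moser local bounds, contributing only a fixed finite amount to all quantities. Reconciling these controlled-at-infinity estimates with the low-regularity interior, verifying that $w=Lu$ is genuinely weakly $L$-harmonic (so that $L$-regularity theory applies to it), and checking that the passage from $u$ to $Lu$ really lowers the growth rate from $d$ to $d-2$ with scale-invariant constants, is the principal technical obstacle; everything else is inherited from the proof of Theorem \ref{mainThm}.
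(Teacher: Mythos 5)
Your proposal is correct and follows essentially the same route as the paper: factor $\mathcal{L}=\Delta^A\Delta^A$ where $\Delta^A u=\sum_{i,j}\partial_i\left(a_{ij}\,\partial_j u\right)$, split $\mathcal{H}^2_d(\mathbb{R}^n,\mathcal{L})$ via the kernel and image of $u\mapsto \Delta^A u$, prove a Caccioppoli-type (reverse Poincar\'e) estimate for $\Delta^A u$ using Euclidean cut-offs whose $\Delta^A$-image is bounded by $C/r^2$ thanks to \eqref{EQ4-3}, and conclude by Theorem \ref{CM3}, whose proof (as you note) only requires $L^2$-growth bounds on balls. The one substantive difference is that you claim the sharp scaling $\int_{B_r}\left|\Delta^A u\right|^2\le C\,r^{-4}\int_{B_{2r}}u^2$ while the paper's bookkeeping (a crude Young step absorbing $\int u^2$ with an $O(1)$ constant) only yields $C\,r^{-2}$; both versions suffice for finite dimensionality, and your stronger estimate is indeed attainable if the interpolation of $\int\varphi^2\left|\nabla u\right|^2$ is done via Cauchy--Schwarz against $\left(\int\varphi^4\left|\Delta^A u\right|^2\right)^{1/2}\left(\int_{B_{\alpha r}}u^2\right)^{1/2}$.
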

	\begin{remark} 
\
		\begin{enumerate}[(1)]
			\item The proof in \cite{Colding-Minicozzi3} also implies that the space $\mathcal{H}_d(\mathbb{R}^n, L)$ in Theorem \ref{CM3} has Weyl type upper bound $Cd^{n-1}$. Then applying similar analysis as in Section \ref{Dimensionbound}, we have that the upper bound of $\dim \mathcal{H}_d^2(\mathbb{R}^n, \mathcal{L})$ is also $Cd^{n-1}$. 
			\item Note that the fourth-order operator $\mathcal{L}$ considered in Theorem \ref{EDF1} is in general different from the fourth-order operators  in \cite{Chen-Wang}. These two operators coincide when the coefficients $\{a_{ij}\}$ are constant, in this special case, the estimate for $\dim \mathcal{H}_d^2(\mathbb{R}^n, \mathcal{L})$ in Theorem \ref{EDF1} is $Cd^{n-1}$ while the estimate derived in \cite{Chen-Wang} is $Cd^{2n}$.
		\end{enumerate}
	\end{remark}

	The rest of this paper is organized as follows: In Section \ref{RevPoin}, we prove a bienergy estimate for biharmonic functions and show Theorem \ref{Biharmonic Liouville Theorem}. In Section \ref{Dimensionbound}, we shall prove the finiteness of dimension for the space as stated in Theorem \ref{mainThm} and then derive the sharp upper bound showed in Theorem \ref{upperbound}. Finally, we show Theorem \ref{EDF1} in Section \ref{Apply}.

\vskip1cm
\section{key estimate and Liouville type theorem of biharmonic functions}\label{RevPoin}
	Around 1974, Yau derived a reverse Poincar\'e inequality for harmonic functions on open manifolds in \cite{Yau2}, which is important in the proof of his conjecture on harmonic functions eventually settled in \cite{Colding-Minicozzi1}. In this section, we shall establish a higher order version of this inequality, namely, a key estimate for biharmonic functions on open manifolds, which plays important roles in deriving the Liouville type result for biharmonic functions - Theorem \ref{Biharmonic Liouville Theorem} and the finite dimensional result for biharmonic functions - Theorem \ref{mainThm}.

\begin{lemma}\label{RP-inqual}
	Let $M^n$ be an open manifold having asymptotically nonnegative Ricci curvature of quadratic decay at $p$ with constant $\kappa \ge 0$ and $u$ be a biharmonic function on $M$. Then there is a constant $R_0>0$ such that for all $r>R_0$ and $\alpha>1$,
	\begin{equation}\label{EQ2-1}
		r^2\int_{B_r(p)}|\Delta u|^2 \le C\int_{B_{\alpha r}(p)}u^2,
	\end{equation}
	where $C>0$ is a constant depending only on $n$, $\kappa$ and $\alpha$.
\end{lemma}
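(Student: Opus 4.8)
The plan is to reduce everything to the auxiliary function $v:=\Delta u$, which is harmonic since $\Delta v=\Delta^2u=0$; by elliptic regularity $u$ is smooth, so the integrations by parts below are all legitimate against compactly supported test functions. The curvature hypothesis enters only through the availability of good cut-off functions: because $M$ has asymptotically nonnegative Ricci curvature of quadratic decay at $p$, the results quoted in Remark~\ref{RK1-1} furnish an $R_0>0$ such that for all $R_0<s<t$ there is $\eta\in C_c^\infty(M)$ with $\eta\equiv1$ on $B_s(p)$, $\operatorname{supp}\eta\subset B_t(p)$, $0\le\eta\le1$, and
\begin{equation*}
	|\nabla\eta|\le\frac{C}{t-s},\qquad |\Delta\eta|\le\frac{C}{(t-s)^2},\qquad C=C(n,\kappa).
\end{equation*}
This is the only step that uses $R_0$ and $\kappa$.

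The key identity comes from moving the Laplacian across in $\int\eta^2v^2$. Writing $\int\eta^2 v\,\Delta u=\int u\,\Delta(\eta^2v)$ and using $\Delta v=0$ to expand $\Delta(\eta^2v)=v\,\Delta(\eta^2)+2\nabla(\eta^2)\cdot\nabla v$ yields
\begin{equation*}
	\int\eta^2 v^2=\int uv\,\Delta(\eta^2)+2\int u\,\nabla(\eta^2)\cdot\nabla v,
\end{equation*}
where both terms on the right are supported on the shell $A:=B_t(p)\setminus B_s(p)$. The reason for working with $v$ rather than $u$ directly is that, $v$ being harmonic, the Caccioppoli identity $\int\eta^2|\nabla v|^2=\tfrac12\int v^2\,\Delta(\eta^2)$ holds, hence $\int\eta^2|\nabla v|^2\le C(t-s)^{-2}\int_A v^2$ with no $u$ appearing. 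This keeps the whole computation closed within the pair $(v^2,u^2)$ with the correct powers of $t-s$; the alternative manipulation that retains $\nabla u$ would instead force in a stray $\int u^2$ term carrying no gain in $t-s$.

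I would then bound the two terms by Cauchy--Schwarz and Young's inequality, inserting $|\nabla(\eta^2)|\le C(t-s)^{-1}$, $|\Delta(\eta^2)|\le C(t-s)^{-2}$ and the Caccioppoli bound for $\int\eta^2|\nabla v|^2$. Choosing the Young parameters proportional to $(t-s)^2$ so as to make the coefficient of $\int_A v^2$ a fixed $\theta<1$, this produces, for all $R_0<s<t\le\alpha r$,
\begin{equation*}
	\int_{B_s(p)}v^2\le\theta\int_A v^2+\frac{C}{(t-s)^4}\int_{B_t(p)}u^2.
\end{equation*}
Writing $\int_A v^2=\int_{B_t}v^2-\int_{B_s}v^2$ and absorbing (the hole-filling trick) turns this into $\int_{B_s}v^2\le\theta'\int_{B_t}v^2+C(t-s)^{-4}\int_{B_t}u^2$ with $\theta'=\theta/(1+\theta)<1$.

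Finally I would apply the standard iteration lemma for such inequalities to $s\mapsto\int_{B_s(p)}v^2$ on $[r,\alpha r]$, which removes the $\int v^2$ term on the right and gives $\int_{B_r}|\Delta u|^2\le C((\alpha-1)r)^{-4}\int_{B_{\alpha r}}u^2$ with $C=C(n,\kappa,\alpha)$; since $r>R_0$ may be taken $\ge1$, this in particular yields the asserted inequality \eqref{EQ2-1}. The main obstacle is precisely the self-referential annular term $\int_A v^2$ manufactured by the cut-off: it cannot be discarded with a single cut-off, and closing the estimate requires both the harmonic Caccioppoli identity for $v$ and good cut-off functions on arbitrarily thin shells — which is exactly where the geometry, through Remark~\ref{RK1-1}, is indispensable.
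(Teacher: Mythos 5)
You have executed the analytic skeleton correctly — the identity $\int\eta^2v^2=\int uv\,\Delta(\eta^2)+2\int u\,\nabla(\eta^2)\cdot\nabla v$, the Caccioppoli identity $\int\eta^2|\nabla v|^2=\tfrac12\int v^2\Delta(\eta^2)$ for harmonic $v=\Delta u$, the hole-filling, and the Giaquinta-type iteration are all standard and correctly assembled — but the argument rests on a cut-off statement strictly stronger than what the quoted literature provides, and this is a genuine gap. Remark~\ref{RK1-1} and Definition~\ref{Laplacian} (via G\"uneysu \cite{Guneysu} and Bianchi--Setti \cite{Bianchi-Setti}) give, for each \emph{fixed ratio} $\alpha>1$, cut-offs with $\eta=1$ on $B_r(p)$, support in $B_{\alpha r}(p)$, $|\nabla\eta|\le C_1/r$ and $|\Delta\eta|\le C_2/r^2$, where $C_1,C_2$ are independent of $r$ but are allowed to depend on $\alpha$ — and indeed the paper's own proof takes $C_i=C_i(n,\kappa,\alpha)$. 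Your iteration on $[r,\alpha r]$ requires cut-offs on shells $B_t(p)\setminus B_s(p)$ of geometrically shrinking width $t-s\ll r$, with $|\nabla\eta|\le C(n,\kappa)/(t-s)$ and $|\Delta\eta|\le C(n,\kappa)/(t-s)^2$ \emph{uniformly} in $s,t$; in the fixed-ratio normalization this amounts to asserting $C_1(\alpha)\lesssim(\alpha-1)^{-1}$ and $C_2(\alpha)\lesssim(\alpha-1)^{-2}$ as $\alpha\downarrow1$, which the cited results, as stated, do not give. The gradient bound on a thin shell is easy, but the Laplacian bound there is precisely the hard point these constructions address (the distributional negative part of $\Delta r$ on the cut locus), so the sentence ``the results quoted in Remark~\ref{RK1-1} furnish\dots'' is an unjustified strengthening, and without it the hole-filling iteration does not close.

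The machinery is also unnecessary for the statement as claimed: Lemma~\ref{RP-inqual} asks only for a gain of $r^{-2}$, and the paper obtains it with a \emph{single} fixed-ratio cut-off by testing $\Delta^2u=0$ against $\varphi^4u$, which gives $\int\varphi^4|\Delta u|^2\le C r^{-4}\int_{B_{\alpha r}(p)}u^2+Cr^{-2}\int\varphi^2|\nabla u|^2$, combined with the first-order Caccioppoli bound $\int\varphi^2|\nabla u|^2\le\int\varphi^4|\Delta u|^2+4\int|\nabla\varphi|^2u^2+\int_{B_{\alpha r}(p)}u^2$; the stray $\int_{B_{\alpha r}(p)}u^2$ term you were trying to engineer away enters multiplied by $r^{-2}$, and the $\int\varphi^4|\Delta u|^2$ on the right is absorbed once $r>R_0=\max\{8\sqrt{15}\,C_1,1\}$ — which, note, is the true source of $R_0$ in the paper (the cut-offs of Definition~\ref{Laplacian} exist for all $r>0$, contrary to your reading). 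Your observation that the term ``carrying no gain in $t-s$'' must be avoided is correct only if one is after the scaling-sharp $r^{-4}$ estimate; for the stated $r^{-2}$ bound \eqref{EQ2-1} it is harmless. If you want the stronger conclusion $r^4\int_{B_r(p)}|\Delta u|^2\le C\int_{B_{\alpha r}(p)}u^2$, you must first prove (or locate) the thin-shell Laplacian cut-offs with the uniform $(t-s)$-scaling — that is where the real work in your approach lies.
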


\begin{proof}[Proof]
	By Definition \ref{quadratic decay} and \cite[Corollary 2.3]{Bianchi-Setti}, the manifold $M$ has the Laplacian cut-off property at $p$. Hence, for $r>0$ and $\alpha >1$, there exists a smooth function $\varphi$ on $M$ satisfying:
	\begin{equation*}
		\begin{cases}
			0 \le \varphi(x) \le 1 \\
			\varphi(x)=1, \qquad \quad  {\rm{for}} \    x \in B_r(p) \\
			\varphi(x)=0, \qquad \quad   {\rm{for}} \     x \in M \backslash B_{\alpha r}(p) \\
			|\nabla \varphi|(x) \le \frac{C_1}{r} \\
			|\Delta \varphi|(x) \le \frac{C_2}{r^2}
		\end{cases}
	\end{equation*}
with constants $0 < C_i=C_i(n,\kappa,\alpha) <\infty$ ($i=1,2$) being independent of $r$. Then, there holds
	\begin{equation*}\label{3-4}
		\begin{aligned}
			0= \int_M \varphi^4\,u\,\Delta^2u 
			=\int_M \left(\varphi^4\,\Delta u+4\,\varphi^3\,u\,\Delta \varphi+8\,\varphi^3\,\nabla\,\varphi\,\nabla\,u +12\,\varphi^2\,u\,|\nabla \varphi|^2\right)\, \Delta u,
		\end{aligned}
	\end{equation*}
	that is,
	\[\int_M \varphi^4\,|\Delta u|^2 = -8\int_M \varphi^3\,\Delta u\,\nabla\varphi\,\nabla u-12\int_M\varphi^2\,u\,\Delta u\,|\nabla\varphi|^2-4\int_M\varphi^3\,u\,\Delta u\,\Delta \varphi.\]
	By H\"older inequality,
	\begin{align*}
		&\int_M \varphi^4\,|\Delta u|^2 \\
		\le&\ 8\int_M \varphi^3 \left|\nabla\varphi\right| \left|\nabla u\right| \left|\Delta u\right|+12\int_M\varphi^2 \left|u\right| \left|\nabla\varphi\right|^2 \left|\Delta u\right|+4\int_M\varphi^3 \left|u\right| \left|\Delta\varphi\right| \left|\Delta u\right|\\
		\le&\ \left\{ 8\left(\int_M\varphi^2 \left|\nabla\varphi\right|^2 \left|\nabla u\right|^2\right)^{\frac{1}{2}}+12\left(\int_M \left|\nabla\varphi\right|^4\,u^2\right)^{\frac{1}{2}} +4\left(\int_M \varphi^2 \left|\Delta\varphi\right|^2\,u^2\right)^{\frac{1}{2}} \right\} \times \\
		&\ \left( \int_M \varphi^4 \left|\Delta u\right|^2 \right)^{\frac{1}{2}},
	\end{align*}%\right. \\ 
	then it follows from the mean inequality and the properties of  the cut-off function $\varphi$ that
	\begin{equation}\label{EQ2-2}
		\begin{aligned}
			\int_M \varphi^4\,|\Delta u|^2
			&\le\ 480\left\{\int_M \varphi^2 \left|\nabla\varphi\right|^2\left|\nabla u\right|^2+ \int_M\left(\left|\nabla\varphi\right|^4+\left|\Delta\varphi\right|^2 \right)u^2 \right \}\\
			&\le\ \frac{480\left(C_1^4+C_2^2\right)}{r^4}\int_{B_{\alpha r}(p)}u^2+\frac{480C_1^2}{r^2}\int_{M}\varphi^2 \left|\nabla u\right|^2.
		\end{aligned}
	\end{equation}
	Furthermore, 
	\begin{align*}
		\int_M \varphi^2\,|\nabla u|^2
		&=\ -\int_M \varphi^2\,u\,\Delta u-2\int_M\varphi\,u\,\nabla\varphi\,\nabla u \\
		&\le\ \frac{1}{2}\int_M \varphi^2 \left|\nabla u\right|^2+2\int_M \left|\nabla\varphi\right|^2 u^2+\frac{1}{2}\int_M \varphi^4\left|\Delta u\right|^2+\frac{1}{2}\int_{B_{\alpha r}(p)} u^2,
	\end{align*}
	that is,
	\begin{equation}\label{EQ2-3}
		\int_M \varphi^2 \left|\nabla u\right|^2 \le \int_M \varphi^4\left|\Delta u\right|^2+4\int_M \left|\nabla \varphi\right|^2\,u^2+\int_{B_{\alpha r}(p)} u^2.
	\end{equation}
	Take $R_0=\max\left\{8\sqrt{15}C_1,1\right\}>0$, then for $r>R_0$, \eqref{EQ2-2} and \eqref{EQ2-3} imply that
	\begin{align*}
		\int_M \varphi^4 \left|\Delta u\right|^2 \le &\ \frac{1}{2}\int_M \varphi^4 \left|\Delta u\right|^2+ \left(\frac{480\left(C_1^4+C_2^2\right)}{r^4}+\frac{480C_1^2}{r^2}\right)\int_{B_{\alpha r}(p)}u^2 \\
		&\ +\frac{1920C_1^2}{r^2}\int_{M}\left|\nabla\varphi\right|^2\,u^2.
	\end{align*}
	Therefore, use the definition of $\varphi$ again, since $r >R_0 \ge 1$, it follows that there exists a constant $C>0$ depending only on $n$, $\kappa$ and $\alpha$ such that
	\[\int_{B_r(p)}|\Delta u|^2 \le \int_{M}\varphi^4\,|\Delta u|^2\le \frac{C}{r^2}\int_{B_{\alpha r}(p)}u^2 \]
	and the proof is completed.
\end{proof}

Letting $r \to \infty$, the key estimate in Lemma \ref{RP-inqual} indicates that any global $L^2$-integrable biharmonic function on open manifolds with asymptotically nonnegative Ricci curvature of quadratic decay must be harmonic, and hence it has to be constant since there is no nonconstant global $L^p$ harmonic function on open manifolds for any $p \in (1,\infty)$ \cite[Theorem 3]{Yau2}. In particular, it has to be identically $0$ if the Ricci curvature is nonnegative \cite[Theorem 7]{Yau2}. We conclude this as follows:

\begin{corollary}
		Let $u$ be a biharmonic function defined on an open manifold $M$ with asymptotically nonnegative Ricci curvature of quadratic decay. Then either 
		\[\int_M u^2 =\infty\]
		or $u$ is a constant. In particular, any global $L^2(M)$-integrable biharmonic function must be identically $0$ if the Ricci curvature of $M$ is nonnegative.
	\end{corollary}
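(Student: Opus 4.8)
The plan is to extract the dichotomy directly from the key estimate of Lemma \ref{RP-inqual}, passing to the full manifold by an exhaustion argument and then invoking Yau's Liouville theorem for $L^p$ harmonic functions. So I would begin by supposing $\int_M u^2 < \infty$, since otherwise the first alternative holds and there is nothing to prove. Fixing any $\alpha > 1$ (say $\alpha = 2$) and writing $C = C(n,\kappa,\alpha)$ for the constant furnished by Lemma \ref{RP-inqual}, the estimate gives, for every $r > R_0$,
\[\int_{B_r(p)}|\Delta u|^2 \le \frac{C}{r^2}\int_{B_{\alpha r}(p)} u^2 \le \frac{C}{r^2}\int_M u^2.\]

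The decisive step is to let the radius tend to infinity correctly. I would fix an arbitrary $\rho > R_0$; since $B_\rho(p) \subset B_r(p)$ for all $r \ge \rho$, the inequality above yields $\int_{B_\rho(p)}|\Delta u|^2 \le C r^{-2}\int_M u^2$ for every such $r$. Holding $\rho$ fixed and sending $r \to \infty$, the right-hand side vanishes, forcing $\int_{B_\rho(p)}|\Delta u|^2 = 0$. As $\rho > R_0$ was arbitrary and the balls $B_\rho(p)$ exhaust $M$, monotone convergence gives $\int_M |\Delta u|^2 = 0$, hence $\Delta u \equiv 0$ almost everywhere; elliptic regularity for the biharmonic equation makes $\Delta u$ smooth, so in fact $\Delta u \equiv 0$ everywhere and $u$ is harmonic.

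It then remains to conclude constancy. Since $u$ is a harmonic function with $\int_M u^2 < \infty$, it is an $L^2$ harmonic function on a complete manifold, and Yau's theorem that no nonconstant $L^p$ harmonic function with $1 < p < \infty$ exists on open manifolds \cite[Theorem 3]{Yau2} shows $u$ is constant, which establishes the stated dichotomy. For the final assertion, if moreover ${\rm{Ric}} \ge 0$, then $M$ has infinite volume (see \cite{Calabi, Yau2}); writing $u \equiv c$, the condition $\int_M u^2 = c^2\,{\rm{Vol}}(M) < \infty$ forces $c = 0$, so $u \equiv 0$, recovering \cite[Theorem 7]{Yau2}.

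I do not expect a serious obstacle: the one point requiring care is the order of limits in the exhaustion step — one must fix the inner radius $\rho$ and send the outer radius $r \to \infty$, not the reverse — together with keeping $\alpha$ fixed so that the constant $C$ in Lemma \ref{RP-inqual} stays bounded. Everything beyond that is a direct appeal to Lemma \ref{RP-inqual} and to the cited theorems of Yau.
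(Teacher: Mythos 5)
Your proposal is correct and follows essentially the same route as the paper, which likewise assumes $\int_M u^2<\infty$, lets $r\to\infty$ in the estimate of Lemma \ref{RP-inqual} to conclude $\Delta u\equiv 0$, invokes Yau's theorem that no nonconstant $L^p$ harmonic function exists for $p\in(1,\infty)$ \cite[Theorem 3]{Yau2}, and uses the infinite-volume consequence of nonnegative Ricci curvature \cite[Theorem 7]{Yau2} to force the constant to vanish. Your careful handling of the exhaustion step (fixing the inner radius before sending $r\to\infty$) merely makes explicit what the paper leaves implicit.
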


	To get the Liouville type property for biharmonic functions on open manifolds stated in Theorem \ref{Biharmonic Liouville Theorem}, we need the following mean value inequality by Li-Schoen \cite{Li-Schoen} and the stronger Liouville type theorem for harmonic functions by Cheng \cite{Cheng} (see also e.g. \cite[Corollary 1.5]{Li5}):
	\begin{theorem}[{\cite[Theorem 1.2]{Li-Schoen}}]\label{LS}
		Let $M^n$ be a complete Riemannian manifold without boundary and assume that the Ricci curvature of $M$ is bounded from below by $-(n-1)K$ for some constant $K \ge 0$. Let $f$ be a nonnegative subharmonic function on $M$. Then there is a constant $C>0$ depending only on $n$ such that for any $r>0$, $p\in M$ and $\tau \in (0,1/2)$, we have 
		\[\sup\limits_{B_{(1-\tau)r}(p)} f^2 \le \frac{\tau^{-C\left(1+\sqrt{K}r\right)}}{{\rm{Vol}}\left(B_r(p)\right)}\int_{B_r(p)} f^2.\]
	\end{theorem}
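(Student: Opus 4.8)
The plan is to prove this by a De Giorgi--Nash--Moser iteration adapted to the Riemannian setting. First I would record the pointwise consequence of subharmonicity: since $f\ge 0$ and $\Delta f\ge 0$ weakly, every power $f^s$ with $s\ge 1$ is again subharmonic, because $\Delta f^s=s f^{s-1}\Delta f+s(s-1)f^{s-2}|\nabla f|^2\ge 0$. Testing the weak inequality $\int\nabla f^s\cdot\nabla\psi\le 0$ against $\psi=\varphi^2 f^s$ with a Lipschitz cutoff $\varphi$ gives, after absorbing, the Caccioppoli estimate
\[\int\varphi^2|\nabla f^s|^2\le C\int|\nabla\varphi|^2\,f^{2s},\]
which is the basic energy input; because $f$ itself (the case $s=1$) is already subharmonic, the iteration can be started directly at the $L^1$ level of $g:=f^2$, matching the right-hand side of the claimed inequality.

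Second, I would invoke a local Sobolev inequality valid on geodesic balls of a manifold with $\mathrm{Ric}\ge-(n-1)K$. Combining Buser's Neumann--Poincar\'e inequality with Bishop--Gromov volume comparison yields, for some $\nu>2$ and all $\psi$ supported in $B_r(p)$,
\[\left(\frac{1}{\mathrm{Vol}(B_r)}\int_{B_r}|\psi|^{\frac{2\nu}{\nu-2}}\right)^{\frac{\nu-2}{\nu}}\le \frac{e^{C(1+\sqrt{K}r)}\,r^2}{\mathrm{Vol}(B_r)}\int_{B_r}\left(|\nabla\psi|^2+r^{-2}\psi^2\right),\]
the essential feature being that the Sobolev constant degrades at most like $e^{C(1+\sqrt{K}r)}$ in the negatively curved regime. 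Applying this with $\psi=\varphi f^s$ and absorbing the gradient term by the Caccioppoli estimate produces the reverse--H\"older gain, on a slightly smaller ball $B'\subset B$, of the integrability exponent by the fixed factor $\chi=\nu/(\nu-2)>1$.

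Third, I would run the iteration over a nested sequence of balls $B_{r_k}$ decreasing from $B_r$ to $B_{(1-\tau)r}$, with annuli of width comparable to $\tau r\,2^{-k}$ so that the cutoffs satisfy $|\nabla\varphi_k|\le C\,2^k/(\tau r)$. Iterating, with the integrability exponent growing geometrically like $\chi^{k}$ at each step, and letting $k\to\infty$, bounds $\sup_{B_{(1-\tau)r}}f^2$ by $\mathrm{Vol}(B_r)^{-1}\int_{B_r}f^2$ times a product of the step constants. The geometric factors $2^k$ sum to a finite constant, while the powers of $\tau^{-1}$ coming from the cutoff gradients accumulate over the iteration; carrying out this product is exactly what yields the stated prefactor $\tau^{-C(1+\sqrt{K}r)}$.

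The hard part will be the precise bookkeeping of the $K$--dependence, namely showing that the exponent of $\tau$ grows only linearly in $\sqrt{K}r$. This reflects the fact that at scale $r$ a manifold with $\mathrm{Ric}\ge-(n-1)K$ behaves like a space of effective dimension $\sim n+\sqrt{K}r$; one must verify that both the Sobolev constant and the effective number of Moser steps contribute only this exponential-in-$\sqrt{K}r$ loss rather than something worse. In the case $K=0$ actually used in this paper, the effective dimension is bounded, the Sobolev constant is uniform, and the prefactor reduces to the classical $\tau^{-C}$ with $C=C(n)$, recovering the standard mean value inequality for nonnegative subharmonic functions.
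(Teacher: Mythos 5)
The paper does not prove this statement; it is quoted verbatim from Li--Schoen, so the relevant comparison is with their original argument, and your sketch is essentially that argument: a De Giorgi--Nash--Moser iteration driven by the Caccioppoli inequality for powers of $f$ and a local Sobolev inequality on geodesic balls whose constant degrades like $e^{C(1+\sqrt{K}r)}$, with the prefactor $\tau^{-C(1+\sqrt{K}r)}$ obtained exactly as you say, by absorbing all exponential step constants into powers of $\tau^{-1}$ (legitimate since $\tau<1/2$ gives $\log(1/\tau)\ge \log 2$). The one substantive divergence is your second step: the implication ``Buser's Neumann--Poincar\'e inequality plus Bishop--Gromov doubling yields a local Sobolev inequality'' is not a formal combination but a genuinely nontrivial theorem (Saloff-Coste, Grigor'yan), which moreover postdates Li--Schoen's 1984 paper; Li--Schoen instead derive the needed Sobolev inequality from Croke's estimate, whose constant is controlled through the isoperimetric constant of the ball together with volume comparison. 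Either route closes the argument, so your scheme is sound, but the Sobolev input must be cited as a theorem in its own right, not presented as an elementary consequence. Two minor points of hygiene: the weak subharmonicity of $f^s$ and the admissibility of the test function $\varphi^2 f^s$ require $f^s\in W^{1,2}_{loc}$, which should be arranged by truncation (work with $\min(f,k)$, or with $f+\epsilon$, and pass to the limit); and your phrase about starting ``at the $L^1$ level of $g:=f^2$'' is slightly misstated --- the iteration starts at exponent $2$ for $f$ with $s_k=\chi^k$, which is what directly produces the bound of $\sup f^2$ by the $L^2$ average of $f$, and in the $K=0$ case used in this paper it reduces, as you note, to the classical $\tau^{-C(n)}$ mean value inequality.
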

	\begin{theorem}[\cite{Cheng}]\label{Cheng}
		Let $M^n$ be an open manifold with nonnegative Ricci curvature, then $M$ does not admit any nonconstant harmonic function $u$ satisfying
		\[r^{-1}(x)\,\left|u\right|(x) \to 0\]
		as $x \to \infty$, where $r(x)$ is the distance function on $M$ from a fixed point.
	\end{theorem}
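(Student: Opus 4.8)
The plan is to derive Theorem~\ref{Cheng} from the Cheng--Yau gradient estimate, invoking the sublinear growth hypothesis only at the very end. The essential input is that on an open manifold $M^n$ with ${\rm Ric}\ge 0$, every \emph{positive} harmonic function $v$ on a geodesic ball $B_{2R}(x)$ satisfies
\[\sup_{B_R(x)}\frac{|\nabla v|}{v}\le \frac{C_n}{R},\]
where $C_n$ depends only on $n$ (this is the $K=0$ instance of the classical estimate). First I would recall its derivation, which is the analytic heart of the argument: applying the Bochner formula to $w=\log v$ and using ${\rm Ric}\ge 0$ together with a refined Kato inequality yields a differential inequality for $Q=|\nabla w|^2$ of the form $\Delta Q\ge c_n\,Q^2-\langle\nabla w,\nabla Q\rangle$; one then localizes by testing against a cut-off function $\phi$ supported in $B_{2R}(x)$ with $\phi\equiv 1$ on $B_R(x)$ and applies the maximum principle to $\phi^2 Q$, absorbing the terms involving $|\nabla\phi|$ and $|\Delta\phi|$ to obtain the stated bound with constant independent of $R$.

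Granting this, I would reduce the sign-changing case to the positive one by a shift. Fix $x\in M$; for each large $R>0$ set $a_R=\inf_{B_{2R}(x)}u$ and $v=u-a_R+1>0$, which is harmonic on $B_{2R}(x)$. Since $\nabla v=\nabla u$, evaluating the gradient estimate at the center gives
\[|\nabla u|(x)=|\nabla v|(x)\le \frac{C_n}{R}\,v(x)\le \frac{C_n}{R}\Big(2\sup_{B_{2R}(x)}|u|+1\Big).\]

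The sublinear growth hypothesis then controls the right-hand side. Given $\eps>0$, the condition $r^{-1}(y)\,|u|(y)\to 0$ supplies $R_0>0$ with $|u|(y)\le \eps\,r(y)$ for $r(y)>R_0$; since every $y\in B_{2R}(x)$ satisfies $r(y)\le r(x)+2R$ while the portion of the ball inside $B_{R_0}(p)$ contributes only the fixed constant $A=\sup_{B_{R_0}(p)}|u|$, we obtain $\sup_{B_{2R}(x)}|u|\le A+\eps\big(r(x)+2R\big)$. Substituting and letting $R\to\infty$ forces
\[|\nabla u|(x)\le \limsup_{R\to\infty}\frac{C_n}{R}\Big(2A+2\eps\,(r(x)+2R)+1\Big)=4\,C_n\,\eps.\]
As $\eps>0$ is arbitrary, $|\nabla u|(x)=0$; and since $x\in M$ is arbitrary, $u$ is constant, which is the desired conclusion.

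I expect the main obstacle to be the gradient estimate itself, specifically producing the sharp quadratic differential inequality for $Q$ via the refined Kato inequality and choosing the cut-off so that all error terms are absorbed into $c_n Q^2$ with a constant $C_n$ that is uniform in $R$; the curvature term is harmless here because ${\rm Ric}\ge 0$. Once this estimate is established, the shift trick and the growth bookkeeping are entirely routine.
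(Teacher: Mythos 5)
Your proposal is correct and follows exactly the route the paper intends: the paper states Theorem \ref{Cheng} as a quoted result from \cite{Cheng} (see also \cite[Corollary 1.5]{Li5}) without giving a proof, and the classical proof behind that citation is precisely your argument --- the localized Cheng--Yau gradient estimate $\sup_{B_R(x)}|\nabla v|/v\le C_n/R$ for positive harmonic functions on $B_{2R}(x)$ under ${\rm Ric}\ge 0$, the shift $v=u-\inf_{B_{2R}(x)}u+1$, and the sublinear-growth bookkeeping as $R\to\infty$. Your sketch of the estimate itself is also the standard Bochner argument (up to the harmless factor of $2$ in $\Delta Q\ge c_nQ^2-2\langle\nabla w,\nabla Q\rangle$, and noting that for the non-sharp constant $C_n$ the crude inequality $|\nabla^2 w|^2\ge(\Delta w)^2/n$ already suffices, with Calabi's trick handling the cut-off where the distance function is not smooth).
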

	
	\begin{proof}[\textbf{Proof of Theorem \ref{Biharmonic Liouville Theorem}:}]
		Let $r(x)$ be the distance function from $p \in M$ and $u$ be a biharmonic function on $M$ with 
		\[|u|(x) \le B\left(1+r^s(x)\right)\] 
		for some $0<B<\infty$ and $s < 1$, then by Lemma \ref{RP-inqual},  $u$ satisfies the key estimate \eqref{EQ2-1} for any $r> R_0 \ge 1$ and $\alpha>1$. Since $\Delta u$ is harmonic, it follows from Theorem \ref{LS} that there is a constant $C>0$ depending only on $n$ such that
		\[\sup\limits_{B_r(p)}|\Delta u|^2 \le \frac{C}{{\rm{Vol}}(B_{2r}(p))} \int_{B_{2r}(p)}|\Delta u|^2 \le \frac{B^2\,C\,{\rm{Vol}}(B_{4r}(p))\left(1+r^{2s}\right)}{r^2 {\rm{Vol}}(B_{2r}(p))} \le \frac{2^n\,B^2\, C\,r^{2s}}{r^2},\]
		where the last inequality follows from the Bishop volume comparison theorem. Therefore, by letting $r \to \infty$, we have
		\[\Delta u =0\] 
		on $M$. Theorem \ref{Cheng} then shows that $u$ is constant.
	\end{proof}

\vskip 1cm
\section{Weyl type bound for biharmonic functions}\label{Dimensionbound}

In this section, we shall firstly prove the finite dimensional result for biharmonic functions on open manifolds - Theorem \ref{mainThm} and then derive a Weyl type sharp upper bound for the dimension of the space of biharmonic functions on open manifolds - Theorem \ref{upperbound}.
	
To prove our main results, we firstly recall some classical results about the space of harmonic functions. 	Let $\mathcal{H}_{p,d}(M)$ be the space of harmonic functions on $M$ with polynomial growth of rate $d$ with respect to $p \in M$. 
	
	\begin{theorem}[{\cite[Theorem 0.7]{Colding-Minicozzi1}}]\label{CM1}
		If $M^n$ is an open manifold which has the doubling property with constant $C_D$ and satisfies the uniform Neumann-Poincar\'e inequality with constant $C_N$, then for all $p \in M$ and $d \ge 0$, there exists a constant $0<C< \infty$ depending only on $n$, $d$, $C_D$, and $C_N$ such that 
		\[\dim\mathcal{H}_{p,d}(M) \le C.\]
	\end{theorem}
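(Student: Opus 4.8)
The plan is to reduce the theorem to a uniform bound on the dimension of an arbitrary finite-dimensional subspace $K \subseteq \mathcal{H}_{p,d}(M)$, the bound being independent of $K$. For $r>0$ I would work with the family of positive semidefinite bilinear forms on $K$ given by $\langle u,v\rangle_r = \int_{B_r(p)} uv$; for $r$ large these are genuine inner products. Three analytic inputs drive the argument, all available from the hypotheses: (i) a scale-invariant mean value inequality $\sup_{B_r(p)} u^2 \le \frac{C}{{\rm{Vol}}(B_{\beta r}(p))}\int_{B_{\beta r}(p)} u^2$ for harmonic $u$, which follows from the doubling property together with the Neumann-Poincar\'e inequality by De Giorgi--Nash--Moser iteration (Saloff-Coste, Grigor'yan); (ii) the reverse Poincar\'e (Caccioppoli) inequality $\int_{B_r(p)}|\nabla u|^2 \le \frac{C}{r^2}\int_{B_{2r}(p)} u^2$ for harmonic $u$; and (iii) the doubling inequality \eqref{doub-inequal} itself.

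The core is a linear-algebra lemma that turns a bound on the growth of Gram determinants into uniform control on half of the dimensions. Suppose $\dim K = 2k$ and, at some scale $r$, the generalized eigenvalues $1\le\lambda_1\le\cdots\le\lambda_{2k}$ of $\langle\cdot,\cdot\rangle_{\beta r}$ relative to $\langle\cdot,\cdot\rangle_r$ satisfy $\prod_i \lambda_i = \det\langle\cdot,\cdot\rangle_{\beta r}/\det\langle\cdot,\cdot\rangle_r \le \Lambda$. Since every $\lambda_i\ge 1$ (because $B_r(p)\subseteq B_{\beta r}(p)$) and the top $k$ of them are each $\ge \lambda_k$, one gets $\lambda_k^k \le \prod_i\lambda_i \le \Lambda$, hence $\lambda_k \le \Lambda^{1/k}$. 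Taking $W$ to be the span of the eigenvectors for $\lambda_1,\dots,\lambda_k$ yields a $k$-dimensional subspace on which $\int_{B_{\beta r}(p)} u^2 \le \Lambda^{1/k}\int_{B_r(p)} u^2$ for all $u\in W$.

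The step I expect to be the main obstacle is producing, at a well-chosen scale, a determinant bound of the form $\Lambda^{1/k}\le Q$ with $Q$ depending only on $n,d,\beta,C_D,C_N$ but not on $k$. A single function of polynomial growth need not have controlled doubling at a given scale, so one cannot estimate the ratio at one fixed $r$. Instead I would telescope over a geometric sequence of radii $r,\beta r,\dots,\beta^N r$: choosing a basis orthonormal for $\langle\cdot,\cdot\rangle_r$ makes $\det\langle\cdot,\cdot\rangle_r = 1$, and Hadamard's inequality together with the polynomial growth bound \eqref{poly-grow} and Remark \ref{RK1-2} control $\det\langle\cdot,\cdot\rangle_{\beta^N r}$ by $(\text{poly in }\beta^N)^{2k}$. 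Since the telescoping product of the $N$ successive determinant ratios equals $\det\langle\cdot,\cdot\rangle_{\beta^N r}$, the pigeonhole principle furnishes an index $j$ at which the single-step ratio is at most the $N$-th root of this quantity; taking $N$ large and independent of $k$ forces $\Lambda^{1/k}\le Q$ at the scale $r_j = \beta^j r$. Balancing the growth in the base against the averaging in $N$ is the delicate point and is exactly the content of Colding--Minicozzi's argument.

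Finally, I would close the estimate on the subspace $W$ obtained above, on which all functions satisfy $\int_{B_{\beta r_j}(p)} u^2 \le Q\int_{B_{r_j}(p)} u^2$. Fix a basis $u_1,\dots,u_k$ of $W$ orthonormal for $\langle\cdot,\cdot\rangle_{r_j}$. The reproducing-kernel identity $\sum_i u_i(x)^2 = \sup\{u(x)^2 : u\in W,\ \int_{B_{r_j}(p)}u^2 = 1\}$ combined with the mean value inequality (i) and the uniform doubling bounds $\sum_i u_i(x)^2$ pointwise on $B_{r_j}(p)$ by $CQ/{\rm{Vol}}(B_{\beta r_j}(p))$. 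Integrating over $B_{r_j}(p)$ and using the doubling property then gives
\[ k = \int_{B_{r_j}(p)}\sum_{i=1}^k u_i^2 \le {\rm{Vol}}(B_{r_j}(p))\cdot \frac{CQ}{{\rm{Vol}}(B_{\beta r_j}(p))} \le C(n,d,C_D,C_N). \]
Hence $\dim K = 2k$ is bounded by a constant independent of $K$, and taking the supremum over all finite-dimensional $K\subseteq \mathcal{H}_{p,d}(M)$ proves $\dim\mathcal{H}_{p,d}(M)\le C$.
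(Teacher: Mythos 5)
Your proposal is correct and follows essentially the same route as the source the paper cites for this statement (the paper gives no proof of its own, quoting Colding--Minicozzi's Theorem 0.7): the mean value inequality from volume doubling plus Neumann--Poincar\'e via Moser iteration, the Gram-determinant/pigeonhole argument over a geometric sequence of radii to extract a half-dimensional subspace with uniformly bounded $L^2$-doubling (with the basis-dependent growth constants correctly washed out by taking the number of scales $N$ large), and the reproducing-kernel trace estimate to bound the dimension of that subspace. No gaps beyond the deliberate sketching of the balancing step, which you attribute accurately.
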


	\begin{theorem}[{\cite[Corollary 0.10]{Colding-Minicozzi3}}]\label{CM2}
		If $M^n$ is an open manifold with nonnegative Ricci curvature, then there exists a constant $0<C<\infty$ depending only on $n$ such that for all $p \in M$ and $d \ge 1$,
		\[\dim\mathcal{H}_{p,d}(M) \le C\,d^{n-1}.\]
	\end{theorem}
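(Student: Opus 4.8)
The statement is the sharp Weyl-type refinement of the finiteness theorem, so the plan is not merely to bound $\dim\mathcal{H}_{p,d}(M)$ but to track the exponent. I would argue by bounding $\dim V$, say $k=\dim V$, for an arbitrary finite-dimensional subspace $V\subseteq\mathcal{H}_{p,d}(M)$, uniformly in $V$. Three consequences of nonnegative Ricci curvature drive everything: first, Bishop--Gromov comparison gives volume doubling with the \emph{sharp} Euclidean exponent, $\mathrm{Vol}(B_{\rho r}(p))\le\rho^{n}\,\mathrm{Vol}(B_r(p))$ for $\rho\ge1$; second, since $|u|$ is a nonnegative subharmonic function whenever $u$ is harmonic, the Li--Schoen mean value inequality of Theorem \ref{LS} (in which the curvature constant equals $0$) yields $\sup_{B_{r/2}(p)}u^2\le \frac{C}{\mathrm{Vol}(B_r(p))}\int_{B_r(p)}u^2$ with $C=C(n)$; third, harmonic functions satisfy the reverse Poincar\'e (Caccioppoli) inequality $\int_{B_r(p)}|\nabla u|^2\le\frac{C}{r^2}\int_{B_{2r}(p)}u^2$. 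It is the sharp exponent $n$ in the first item, together with the loss of one radial degree of freedom, that will produce the exponent $n-1$.

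On $V$ I would place the monotone family of inner products $A_r(u,v)=\int_{B_r(p)}uv$, positive definite for $r$ large, and read the dimension off its growth. The central estimate I would establish is an upper bound on the trace of $A_r$ with respect to $A_{\beta r}$: taking an $A_{\beta r}$-orthonormal basis $\{e_i\}_{i=1}^k$ and applying the mean value inequality on balls $B_{(\beta-1)r}(x)\subseteq B_{\beta r}(p)$ for $x\in B_r(p)$, followed by volume doubling, gives $\sup_{B_r(p)}e_i^2\le C\,\mathrm{Vol}(B_{(\beta-1)r}(p))^{-1}$ and hence
\[
\mathrm{tr}_{A_{\beta r}}A_r=\sum_{i=1}^k\int_{B_r(p)}e_i^2\le C\,(\beta-1)^{-n}\,k .
\]
This single inequality is where the volume exponent $n$ enters. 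The complementary input is that polynomial growth of rate $d$ caps how fast the forms can grow between scales: controlling the doubling of $V$ through the reverse Poincar\'e inequality and the growth hypothesis bounds $\det A_{\beta r}/\det A_r$ from above by a factor of order $(\beta^{2d})^{k}$ up to volume factors, so genuine growth of the Gram determinant can occur over only boundedly many effective scales.

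The dimension count is then a competition between these two bounds across a geometric sequence of scales $r_j=\beta^{j}r_0$, and the main obstacle is to run it so that the exponent comes out as $n-1$ rather than the crude $n$. The plan is to choose $\beta=1+c/d$, so that the growth factors $\beta^{2d}=e^{O(1)}$ stay bounded while advancing the scale by a fixed ratio requires $\sim d$ steps; the trace bound above then contributes $(\beta-1)^{-n}=(c^{-1}d)^{n}$ per scale, and the accounting must recover a net $d^{n-1}$. Concretely, I would reduce the count at a fixed scale to a Weyl-law estimate for the number of functions in $V$ whose $A_{\beta r}$-mass is not already essentially captured by $A_r$; this is governed by an eigenvalue-counting problem on the annulus $B_{\beta r}(p)\setminus B_r(p)$, whose Weyl exponent is the volume exponent $n$ reduced by the one radial direction, giving $\sim d^{\,n-1}$ after summing the $\sim d$ frequencies --- exactly mirroring the Euclidean count $\sum_{j\le d}\dim\{$spherical harmonics of degree $j\}\sim d^{n-1}$.

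Taking the supremum over all finite-dimensional $V\subseteq\mathcal{H}_{p,d}(M)$ then yields $\dim\mathcal{H}_{p,d}(M)\le C\,d^{n-1}$ with $C=C(n)$, since every constant above depends on $M$ only through the Euclidean volume-doubling exponent, the Li--Schoen constant and the Caccioppoli constant, all controlled by $n$ alone. I expect the monotonicity/trace set-up and the upper trace bound to be essentially mechanical; the genuine difficulty, carried out in \cite{Colding-Minicozzi3}, is the scale-by-scale spectral counting that separates the radial from the angular directions and thereby sharpens the exponent from $n$ to $n-1$.
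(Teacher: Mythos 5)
The paper contains no proof of Theorem \ref{CM2}: it is imported verbatim from \cite[Corollary 0.10]{Colding-Minicozzi3} and used as a black box in Section \ref{Dimensionbound}, so your sketch has to be measured against the Colding--Minicozzi argument it is reconstructing, not against anything internal to this paper.

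Measured that way, the step you actually carry out is vacuous. Your central trace inequality, $\operatorname{tr}_{A_{\beta r}}A_r\le C(\beta-1)^{-n}\,k$, is obtained by bounding each $\int_{B_r(p)}e_i^2$ separately and summing over $i$; but since $A_r\le A_{\beta r}$ as quadratic forms, every relative eigenvalue is at most $1$, so $\operatorname{tr}_{A_{\beta r}}A_r\le k$ holds trivially, and with your choice $\beta=1+c/d$ your bound $C(d/c)^{n}k$ is strictly weaker than the trivial one. Feeding it into the ``competition'' with the determinant/pigeonhole lower bound $\operatorname{tr}\ge k\,\beta^{-(2d+C)}$ (which follows from polynomial growth via AM--GM at good scales), the factor $k$ cancels from both sides and no dimension bound follows at all. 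The engine of the Colding--Minicozzi proof is precisely that the trace bound is \emph{independent of} $k$: the function $x\mapsto\sum_i e_i(x)^2$ does not depend on the choice of $A_{\beta r}$-orthonormal basis, so for each fixed $x$ one may rotate the basis so that at most one element is nonzero at $x$, whence $\sum_i e_i(x)^2\le C\,\mathrm{Vol}\left(B_{(\beta-1)r}(x)\right)^{-1}$ with no factor of $k$; integrating over $B_r(p)$ and using Bishop--Gromov gives $\operatorname{tr}_{A_{\beta r}}A_r\le C(\beta-1)^{-n}$, and then $k\le C\beta^{2d+C}(\beta-1)^{-n}\le C d^{\,n}$, which is essentially the argument of \cite{Colding-Minicozzi1}. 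Separately, your passage from $d^{\,n}$ to the sharp $d^{\,n-1}$ is not established: the ``Weyl eigenvalue count on the annulus'' is a heuristic you never execute, and it is not what \cite{Colding-Minicozzi3} does --- there the extra factor $(\beta-1)\sim 1/d$ is gained from a sharpened trace estimate that averages over the radial scale, exploiting orthonormality at the outer radius, rather than from a Laplace-spectrum count; you explicitly defer this decisive step to the cited paper. As written, the proposal is therefore an annotated citation rather than a proof: its one quantitative step carries no content, and the sharpening that gives the exponent $n-1$ is assumed. (A minor remark: the Caccioppoli inequality in your list of ingredients is not needed here --- the growth hypothesis bounds $\int_{B_r}u^2$ directly, which is all the determinant pigeonhole uses.)
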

	
Next, we shall associate these results with the space $\mathcal{H}^2_{p,d}(M)$ defined with respect to a chosen point $p \in M$. Since it is easy to see that $\mathcal{H}_{p,d}(M) \subset \mathcal{H}^2_{p,d}(M)$, we consider the following linear space:
	\[\widetilde{\mathcal{H}}_{p,d}^2(M):=\left\{\Delta u: u \in \mathcal{H}_{p,d}^2(M)\right\},\]
	and then define a map $\Phi$ as follows:
	\begin{align*}
		\Phi: \mathcal{H}_{p,d}^2(M) & \longrightarrow  \widetilde{\mathcal{H}}_{p,d}^2(M) \\
		u & \longmapsto  \Delta u .
	\end{align*}
	It is clear that functions in $\widetilde{\mathcal{H}}_{p,d}^2(M)$ are harmonic and $\Phi$ is a linear surjection from $\mathcal{H}_{p,d}^2(M)$ to $\widetilde{\mathcal{H}}_{p,d}^2(M)$ with \[\ker(\Phi)=\mathcal{H}_{p,d}(M)\quad {\rm{and}}\quad {\rm{Im}}(\Phi)=\widetilde{\mathcal{H}}_{p,d}^2(M).\]
	We will denote by $C>0$ a constant which may be different from line to line.
	
	\begin{proof}[\textbf{Proof of Theorem \ref{mainThm}}]
		
		Firstly, Theorem \ref{CM1} tells us that $\mathcal{H}_{p,d}(M)$ is of finite dimension with a dimension upper bound $C(n, d, C_D, C_N)>0$. 

Furthermore, by Lemma \ref{RP-inqual} and Remark \ref{RK1-2}, for any $u \in \mathcal{H}_{p,d}^2(M)$, there exist constants $0<C=C(n,\kappa,\alpha)<\infty$ and $R_0 \ge 1$ such that
		\begin{equation*}\label{EQ3-1}
		\begin{aligned}
			\int_{B_r(p)}|\Delta u|^2 
			\le  C\left(1+r^{2d+\log_2C_D-2}\right)
		\end{aligned}
		\end{equation*}
		for $r>R_0$ and $\alpha>1$. Take a specific $\alpha>1$, then for any $f \in \widetilde{\mathcal{H}}_{p,d}^2(M)$ and $r>R_0$, there exists a constant $0<C<\infty$ depending only on $n$ and $\kappa$ such that
		\[\int_{B_r(p)} f^2 \le C\left(1+r^{2d+\log_2C_D-2}\right).\]
		Since $f \in \widetilde{\mathcal{H}}_{p,d}^2(M)$ is harmonic, the proof of Theorem \ref{CM1} then implies that $\widetilde{\mathcal{H}}_{p,d}^2(M)$ is also a finite dimensional space with a dimension upper bound $0<C<\infty$ depending only on $n$, $\kappa$, $d$, $C_D$ and $C_N$.
		
		Now we can claim that $\mathcal{H}^2_{p,d}(M)$ is of finite dimension. To see this, assume 
		\[\dim \mathcal{H}^2_{p,d}(M)=\infty\]
		and let $\{v_1,\cdots,v_l\}$ be the basis of $\mathcal{H}_{p,d}(M)$. Then for any positive integer $s$, we can find $s$ linearly independent elements $\{w_1,\cdots,w_s\}$ in $\mathcal{H}^2_{p,d}(M)$ with $v_1,\cdots,v_l, w_1, \cdots, w_s$ being also linearly independent. Let 
		\[z_i=\Delta w_i, \quad i=1,\cdots,s,\] 
		then $\{z_1, \cdots,z_s\} \subset \widetilde{\mathcal{H}}_{p,d}^2(M)$. If $z_1, \cdots,z_s$ are linearly dependent, then there exists 
		\[0 \ne (k_1,\cdots,k_s) \in \mathbb{R}^s\] 
		such that
		\[k_1 z_1+\cdots+k_s z_s=0,\]
		which implies that $k_1 w_1+\cdots+k_s w_s$ is an element in $\mathcal{H}_{p,d}(M)$ and thus there exists 
		\[(h_1,\cdots, h_l) \in \mathbb{R}^l\] 
		such that
		\begin{equation}\label{EQ3-3}
			k_1 w_1+\cdots+k_s w_s+h_1 v_1+\cdots+h_l v_l=0.
		\end{equation}
		Since $(k_1,\cdots,k_s) \ne 0$ and $v_1,\cdots,v_l, w_1, \cdots, w_s$ are linearly independent, the identity \eqref{EQ3-3} is impossible. Hence $z_1, \cdots,z_s$ are linearly independent in $\widetilde{\mathcal{H}}_{p,d}^2(M)$. However, since $\widetilde{\mathcal{H}}_{p,d}^2(M)$ is of finite dimension and $s$ is an arbitrary positive integer, this is a contradiction. 

Therefore, the space $\mathcal{H}^2_{p,d}(M)$ is finite dimensional. The dimensional formula in linear algebraic theory then shows that
		\[\dim\mathcal{H}_{p, d}^2(M)=\dim \ker(\Phi)+\dim {\rm{Im}}(\Phi)=\dim\mathcal{H}_{p, d}(M)+\dim\widetilde{\mathcal{H}}_{p, d}^2(M)\]
		and the proof is completed.	
	\end{proof}
	
\begin{proof}[\textbf{Proof of Theorem \ref{upperbound}}]
By Theorem \ref{Biharmonic Liouville Theorem}, we have $\dim \mathcal{H}^2_{p,d}(M)=1$ for $0 \le d <1$. Hence, we only need to consider the case of $d \ge 1$.
		
		Since $M$ has nonnegative Ricci curvature, then Lemma \ref{RP-inqual} holds for all $p \in M$. From the proof of Theorem \ref{mainThm}, for any $p \in M$, since
		\begin{equation}\label{EQ3-4}
			\dim\mathcal{H}^2_{p,d}(M) =\dim\mathcal{H}_{p,d}(M)+\dim\widetilde{\mathcal{H}}^2_{p,d}(M),
		\end{equation}
		the dimension estimate for $\mathcal{H}^2_{p,d}(M)$ is therefore reduced to estimating $ \dim\mathcal{H}_{p,d}(M)$ and $\dim\widetilde{\mathcal{H}}_{p,d}^2(M)$.
		
		To begin with, Theorem \ref{CM2} shows that there is a constant $C>0$ depending only on $n$ such that
		\[\dim\mathcal{H}_{p,d}(M) \le C\,d^{n-1}\]
		for $d \ge 1$.Then it remains to estimate ${\rm{dim}}\widetilde{\mathcal{H}}^2_{p,d}(M)$.
		
		For any $f \in \widetilde{\mathcal{H}}_{p,d}^2(M)$, as shown in the proof of Theorem \ref{mainThm}, take a specific $\alpha>1$, it follows that there exist constants $C=C(n)>0$ and $R_0 \ge 1$ such that
		\[\int_{B_r(p)} f^2\le C\left(1+r^{2d+n-2}\right)\]
		for $r >R_0$. On the other hand, since $f \in \widetilde{\mathcal{H}}_{p,d}^2(M)$ is harmonic, it follows that $\left|f\right|$ is a nonnegative subharmonic function on $M$, the mean value inequality stated in Theorem \ref{LS} then implies that $f$ has sublinear growth if $1 \le d <2$. Hence, Theorem \ref{Cheng} implies that
		\[\dim\widetilde{\mathcal{H}}_{p,d}^2(M)=1\] 
		for $1 \le d <2$. If $d \ge 2$, the proof of Theorem \ref{CM2} implies that there is a constant $C>0$ depending only on $n$ such that
		\[\dim\widetilde{\mathcal{H}}_{p,d}^2(M) \le C\left(d-1\right)^{n-1}.\]
		
		Above all, Theorem \ref{upperbound} follows from \eqref{EQ3-4}.
	\end{proof}

\vskip 1cm
\section{Finite dimension result for a class of fourth-order operator on Euclidean space}\label{Apply}
	In this section, we shall prove a finite dimensional result for a class of fourth-order partial differential operators on $\mathbb{R}^n$ - Theorem \ref{EDF1}.

	Firstly, we recall the following classical result for a class of second-order partial differential operators on $\mathbb{R}^n$:
	\begin{theorem}[{\cite[Theorem 6.8]{Colding-Minicozzi1}}]\label{CM3}
		Let $L$ be a second-order operator on $\mathbb{R}^n$ defined by
		\[Lu:=\sum\limits_{i,j=1}^n \frac{\partial}{\partial x_i}\left(a_{ij}(x) \frac{\partial}{\partial x_j} u\right)\]
		with some measurable functions $a_{ij}(x)$ ($i,j=1,\cdots,n$) satisfying
		\begin{equation*}
			a_{ij}(x)=a_{ji}(x),\quad \left(a_{ij}(x)\right) \ge c_1 I \quad {\rm{and}}\quad \sum\limits_{i,j=1}^n a_{ij}(x)\, x_i\,x_j \le c_2\,|x|^2
		\end{equation*} 
		for positive constants $c_1$ and $c_2$. Let $\mathcal{H}_d(\mathbb{R}^n, L)$ be the space composed of functions $u$ satisfying $L u=0$ on $\mathbb{R}^n$ in the weak sense and having polynomial growth with  rate $d$ with respect to the original point $0 \in \mathbb{R}^n$. Then for all $d \ge 0$, there is a constant $0<C<\infty$ depending only on $n$, $c_1$, $c_2$ and $d$ such that
		\[\dim \mathcal{H}_d(\mathbb{R}^n, L) \le C.\]
	\end{theorem}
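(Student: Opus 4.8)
The plan is to exploit a factorization of $\mathcal{L}$. Writing $Lu := \sum_{i,j}\partial_i\big(a_{ij}\partial_j u\big)$ for the associated second-order divergence operator, a direct computation (summing the inner indices $i,j$ first) shows that $\sum_{i,j}\partial_l\partial_i\big(a_{ij}\partial_j u\big) = \partial_l(Lu)$, whence
\[\mathcal{L}u = \sum_{k,l}\partial_k\Big(a_{kl}\,\partial_l(Lu)\Big) = L(Lu);\]
that is, $\mathcal{L} = L\circ L$. Moreover \eqref{EQ4-2} furnishes exactly the hypotheses of Theorem \ref{CM3} for $L$ (symmetry $a_{ij}=a_{ji}$, the lower bound $(a_{ij})\ge c_1 I$, and $\sum a_{ij}x_ix_j\le c_2|x|^2$ obtained by taking $\xi = x$), so $\dim\mathcal{H}_d(\mathbb{R}^n,L)<\infty$. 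I would then run the same exact-sequence argument as in the proof of Theorem \ref{mainThm}: set $v = Lu$, introduce $\widetilde{\mathcal{H}}^2_d := \{Lu : u\in\mathcal{H}^2_d(\mathbb{R}^n,\mathcal{L})\}$, and observe that $\Phi: u\mapsto Lu$ is a linear surjection from $\mathcal{H}^2_d(\mathbb{R}^n,\mathcal{L})$ onto $\widetilde{\mathcal{H}}^2_d$ with kernel $\mathcal{H}_d(\mathbb{R}^n,L)$. By rank--nullity it then suffices to bound $\dim\widetilde{\mathcal{H}}^2_d$; and since every $v\in\widetilde{\mathcal{H}}^2_d$ is $L$-harmonic ($Lv=0$ weakly), this reduces to showing that each such $v$ has controlled polynomial $L^2$-growth and then invoking the dimension-estimate machinery behind Theorem \ref{CM3}.

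The crux is therefore a variable-coefficient analogue of Lemma \ref{RP-inqual}: for $u$ solving $\mathcal{L}u = L^2u = 0$ and $r$ large,
\[r^2\int_{B_r(0)}|Lu|^2 \le C\int_{B_{\alpha r}(0)}u^2.\]
To prove it I would take the standard Euclidean radial cut-off $\varphi$ with $\varphi\equiv 1$ on $B_r(0)$, $\mathrm{supp}\,\varphi\subset B_{\alpha r}(0)$, $|\nabla\varphi|\le C/r$ and $|\nabla^2\varphi|\le C/r^2$, and test the two weak identities $Lv=0$ and $Lu=v$ against $\varphi^4 u$ and $\varphi^4 v$ respectively. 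Using the symmetry $a_{ij}=a_{ji}$ (self-adjointness of $L$) to cancel the bulk terms, these combine to give
\[\int\varphi^4 v^2 = -4\int\varphi^3 v\sum_{i,j}a_{ij}(\partial_j u)(\partial_i\varphi) + 4\int\varphi^3 u\sum_{i,j}a_{ij}(\partial_j v)(\partial_i\varphi),\]
in which every term is supported where $\nabla\varphi\ne 0$, i.e.\ on the annulus $B_{\alpha r}(0)\setminus B_r(0)$. The first term is dispatched by a Caccioppoli inequality for $u$ (test $Lu=v$ against $\varphi^2 u$). For the second term, which still carries $\nabla v$, I would integrate by parts once more to move the derivative off $v$; this produces, besides terms already of the first type and terms with $|\nabla\varphi|^2$ or $\sum a_{ij}\partial_i\partial_j\varphi$ (hence $O(r^{-2})$), exactly one term involving the drift $\sum_i\partial_i a_{ij}$, which is controlled by the decay hypothesis \eqref{EQ4-3}. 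Crucially, all of these coefficient-derivative terms live on the annulus where $|x|>R$, so the assumption $a_{ij}\in C^1(\mathbb{R}^n\setminus\overline{B_R}(0))$ is precisely enough to justify the integration by parts, and no regularity of $a_{ij}$ inside $B_r(0)$ is used. Careful bookkeeping of the powers of $r$ (the undecayed term $\int\varphi^2|u||v|$ in the Caccioppoli inequality for $u$ dominates and produces the rate $r^{-2}$), together with absorption of the $\int\varphi^4 v^2$ contributions for $r>R_0$, then yields the key estimate.

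With the key estimate in hand the conclusion is immediate. For $u\in\mathcal{H}^2_d(\mathbb{R}^n,\mathcal{L})$, the Euclidean volume bound $\mathrm{Vol}(B_{\alpha r}(0))\le Cr^n$ gives $\int_{B_r(0)}|Lu|^2\le C(1+r^{2d+n-2})$, so $v=Lu$ is $L$-harmonic with polynomial $L^2$-growth. The argument underlying Theorem \ref{CM3} (which only needs such an $L^2$-growth bound, exactly as the proof of Theorem \ref{CM1} is used inside the proof of Theorem \ref{mainThm}) then bounds $\dim\widetilde{\mathcal{H}}^2_d$ by a constant depending only on $n$, $d$ and $\{c_i\}_{i=1}^4$, and combining with $\dim\mathcal{H}_d(\mathbb{R}^n,L)$ via rank--nullity completes the proof.

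I expect the main obstacle to be the key estimate for the variable-coefficient operator. Unlike the Laplacian case of Lemma \ref{RP-inqual}, the integrations by parts generate drift terms $\partial_i a_{ij}$ and full-Hessian terms $\sum a_{ij}\partial_i\partial_j\varphi$; the whole scheme must be arranged (via the two integrations by parts and repeated use of $a_{ij}=a_{ji}$) so that every error term localizes to the annulus $\{|x|\sim r\}$ and carries a factor $|\nabla\varphi|$ or $|\nabla\varphi|^2$, so that \eqref{EQ4-3} and $|\nabla^k\varphi|\le Cr^{-k}$ supply the needed decay while only $C^1$-regularity away from $B_R(0)$ is invoked. A secondary technical point is ensuring, via interior elliptic regularity (De Giorgi--Nash--Moser for $Lv=0$ and $Lu=v$), that $v=Lu\in W^{1,2}_{\mathrm{loc}}$ is a legitimate test object and that the factorization $\int u\,L^2\phi = \int Lu\cdot L\phi$ holds, so that $v$ genuinely solves $Lv=0$ in the weak sense.
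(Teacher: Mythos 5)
You have not proved the statement at hand, and the gap is structural rather than technical: the statement is Theorem \ref{CM3}, the Colding--Minicozzi finite-dimensionality result for the \emph{second-order} divergence-form operator $L$ on $\mathbb{R}^n$, whereas your argument proves the fourth-order result, Theorem \ref{EDF1}, and does so by explicitly \emph{invoking} Theorem \ref{CM3} as an ingredient (``\eqref{EQ4-2} furnishes exactly the hypotheses of Theorem \ref{CM3} for $L$, so $\dim\mathcal{H}_d(\mathbb{R}^n,L)<\infty$''). As a proof of Theorem \ref{CM3} this is circular: the finite dimensionality of $\mathcal{H}_d(\mathbb{R}^n,L)$ is precisely what must be established, and nothing in the factorization $\mathcal{L}=L\circ L$, the key estimate $r^2\int_{B_r}|Lu|^2\le C\int_{B_{\alpha r}}u^2$, or the rank--nullity reduction addresses it. Note also that the paper itself offers no proof of Theorem \ref{CM3} --- it is quoted verbatim from \cite[Theorem 6.8]{Colding-Minicozzi1} --- so the correct comparison object is the original Colding--Minicozzi argument: there one checks that $\mathbb{R}^n$ equipped with a symmetric, uniformly elliptic $L$ satisfying $(a_{ij})\ge c_1 I$ and $\sum a_{ij}x_i x_j\le c_2|x|^2$ supports the ingredients of the abstract finite-dimensionality machinery (doubling and Neumann--Poincar\'e structure, together with mean-value and reverse-Poincar\'e/Caccioppoli inequalities for $L$-harmonic functions, as in Theorem \ref{CM1}); no fourth-order input appears anywhere, and none of your key steps substitutes for this.

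For what it is worth, read instead as a blind proof of Theorem \ref{EDF1}, your proposal coincides in all essentials with the paper's Section \ref{Apply}: the identity $\mathcal{L}u=\Delta^A\Delta^A u$; the Euclidean cut-off with $|\nabla\varphi|\le C_1/r$, $|\nabla^2\varphi|\le C_2/r^2$, with the drift condition \eqref{EQ4-3} supplying $\left|\Delta^A\varphi\right|\le D/r^2$ on the annulus where $|x|>R$; the testing against $\varphi^4 u$ (the paper does a single double integration by parts of $\int\varphi^4 u\,\mathcal{L}u$ rather than your two separate weak identities, but the bookkeeping is equivalent); the Caccioppoli step \eqref{EQ4-6}; absorption for $r>R_0=\max\{R,\sqrt{2D_2},1\}$ yielding \eqref{RPI}; and the rank--nullity decomposition borrowed from the proof of Theorem \ref{mainThm}. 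So your argument is a faithful reconstruction of the paper's proof of a \emph{different} theorem; the assigned statement, Theorem \ref{CM3}, remains unproved by your proposal and would require the separate second-order analysis of \cite{Colding-Minicozzi1}.
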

	
	Let $\nabla u=\left(\frac{\partial}{\partial_1} u,\cdots,\frac{\partial}{\partial_n} u\right)^{\rm{T}}$ be the gradient of $u$ on $\mathbb{R}^n$. For measurable functions $a_{ij}(x)$ ($i,j=1,\cdots,n$) and (weakly) differentiable function $u$ on $\mathbb{R}^n$, for simplicity, we denote 
	\[A(x)=\left(a_{ij}(x)\right)_{n \times n},\quad\nabla^A u:=A\,\nabla u,\quad\Delta^A u:={\rm{div}}\,\nabla^A u,\] 
	where ${\rm{div}}$ is the divergence operator on $\mathbb{R}^n$. Then
	\[\Delta^A u=\sum\limits_{i,j=1}^n \partial_i\left(a_{ij}(x)\,\partial_j u\right) \quad {\rm{and}}\quad \mathcal{L}u=\Delta^A\Delta^A u.\]

	\begin{proof}[\textbf{Proof of Theorem \ref{EDF1}}]
		Let $B_r:=B_r(0)$ be the geodesic ball on $\mathbb{R}^n$ centered at $0$ and with radius $r$. From Theorem \ref{CM3} and the proof of Theorem \ref{mainThm}, it is sufficient to prove that there exists constants $R_0 \ge 0$ and $0<C<\infty$ such that any solution to equation \eqref{EQ4-1} satisfies
		\begin{equation}\label{RPI}
			\int_{B_r} \left| \Delta^A u \right|^2 \,dx \le \frac{C}{r^2}\int_{B_{\alpha r}} u^2\,dx.
		\end{equation}
		for all $r >R_0$ and $\alpha >1$.
	
		Firstly, for $r>R$ and $\alpha>1$, we can choose a smooth function $\varphi$ on $\mathbb{R}^n$ satisfying
		\begin{align*}
			\begin{cases}
				0 \le \varphi(x) \le 1 \\
				\varphi(x)=1 \quad&\ {\rm{for}} \  x \in B_r \\
				\varphi(x)=0 \quad&\ {\rm{for}} \  x \in \mathbb{R}^n \backslash B_{\alpha r} \\
				\left|\nabla \varphi\right| \le \frac{C_1}{r} \\
				\left|\nabla^2 \varphi\right| \le \frac{C_2}{r^2}
			\end{cases}
		\end{align*}
		with $0<C_i=C_i(n,\alpha)<\infty$ ($i=1,2$) independent of $r$. 
		From the definition of $\varphi$ and condition \eqref{EQ4-3}, there is a constant $D>0$ depending only on $n$, $\alpha$, $c_3$ and $c_4$ such that
		\[\left|\Delta^A \varphi\right| \le \frac{D}{r^2}.\]
		Then for any solution $u$ to \eqref{EQ4-1}, from the principle of integral by parts, we have that
		\begin{align*}
			0
			=&\ \int_{\mathbb{R}^n}\varphi^4\,u\,\mathcal{L} u \,dx \\
			=&\ \int_{\mathbb{R}^n}\sum\limits_{i,j,k,l=1}^n \varphi^4\,u\,\partial_k\left( a_{kl}\,\partial_l\,\partial_i\left(a_{ij}\,\partial_j u\right)\right) \,dx\\
			=&\ \int_{\mathbb{R}^n}\sum\limits_{i,j,k,l=1}^n  \partial_k\left( a_{kl}\,\partial_l\,(\varphi^4\,u)\right)\partial_i\left(a_{ij}\,\partial_j u\right) \,dx \\
			=&\ \int_{\mathbb{R}^n} \Delta^{A}\left(\varphi^4 u\right)\Delta^A u \,dx \\
			=&\ \int_{\mathbb{R}^n} \left(\varphi^4\,\Delta^A u+4\,\varphi^3\,u\,\Delta^A \varphi+8\,\varphi^3\,\nabla^A\varphi\,\nabla u+12\,\varphi^2\,u\,\nabla^A\varphi\,\nabla \varphi\right) \Delta^A u \,dx.
		\end{align*}
		From \eqref{EQ4-2} and the Cauchy-Schwarz inequality, since
		\begin{align*}
			\left|\nabla^A\varphi\,\nabla v\right| =  \left|\nabla\varphi\,\nabla^A v\right| =  \left|\sum\limits_{i,j=1}^n a_{ij}\,\partial_i\varphi\,\partial_j v \right| &\ \le  \  \left(\sum\limits_{i,j=1}^n a_{ij}\,\partial_i\varphi\,\partial_j \varphi \right)^{\frac{1}{2}}\left(\sum\limits_{i,j=1}^n a_{ij}\,\partial_i v\,\partial_j v \right)^{\frac{1}{2}} \\
			&\ \le   \  c_2\left|\nabla\varphi\right|\left|\nabla v\right|
		\end{align*}
		holds for any differentiable function $v$, we have that
		\begin{equation*}
			\int_{\mathbb{R}^n} \varphi^2\,\left|u\right|\,\left|\nabla^A\varphi\,\nabla\varphi\right|\left|\Delta^A u\right| \,dx
			\le c_2\int_{\mathbb{R}^n} \varphi^2\,\left|u\right|\,\left|\nabla \varphi\right|^2\,\left|\Delta^A u\right| \,dx.
		\end{equation*}
		and
		\begin{equation*}
			\int_{\mathbb{R}^n} \varphi^3\,\left|\nabla^A \varphi \,\nabla u\right|\,\left|\Delta^A u\right| \,dx \le c_2\int_{\mathbb{R}^n} \varphi^3\,\left|\nabla\varphi\right|\,\left|\nabla u\right|\,\left|\Delta^A u\right| \,dx.
		\end{equation*}
		Therefore,
		\begin{align*}
			&\int_{\mathbb{R}^n} \varphi^4\,\left|\Delta^A u\right|^2 \,dx \\
			=&\ -\int_{\mathbb{R}^n} \left(4\,\varphi^3\,u\,\Delta^A \varphi+8\,\varphi^3\,\nabla^A\varphi\,\nabla u+12\,\varphi^2\,u\,\nabla^A \varphi\,\nabla\varphi\right)\Delta^A u \,dx \\
			\le&\ \int_{\mathbb{R}^n} \left(4\,\varphi^3\,\left|u\right|\left|\Delta^A \varphi\right|\left|\Delta^A u\right|+8\,c_2\,\varphi^3 \left|\nabla \varphi\right|\left|\nabla u\right|\left|\Delta^A u\right|+12\,c_2\,\varphi^2\left|u\right|\left|\nabla \varphi\right|^2 \left|\Delta^A u\right|\right)\,dx\\
			\le&\ \left\{4\left(\int_{\mathbb{R}^n}\varphi^2\left|\Delta^A\varphi\right|^2 u^2\,dx\right)^{\frac{1}{2}}+8\,c_2\left(\int_{\mathbb{R}^n}\varphi^2\left|\nabla \varphi\right|^2\left|\nabla u\right|^2\,dx\right)^{\frac{1}{2}} \right.\\
			&\ \left. +12\,c_2\,\left(\int_{\mathbb{R}^n}\left|\nabla \varphi\right|^4 u^2\,dx\right)^{\frac{1}{2}}\right\}\, \times \left(\int_{\mathbb{R}^n} \varphi^4\,\left|\Delta^A u\right|^2\,dx\right)^{\frac{1}{2}},
		\end{align*}
		which implies that there exist a constant $D_1>0$ depending only on $c_2$ such that
		\begin{equation}\label{EQ4-5}
			\begin{aligned}
				&\int_{\mathbb{R}^n} \varphi^4\left| \Delta^A u \right|^2 \,dx \\
				\le&\ D_1 \left(\int_{\mathbb{R}^n}\varphi^2\left|\Delta^A\varphi\right|^2 u^2\,dx+\int_{\mathbb{R}^n}\left|\nabla\varphi\right|^4 u^2\,dx+\int_{\mathbb{R}^n}\varphi^2\left|\nabla\varphi\right|^2\left|\nabla u\right|^2\,dx\right) \\
				\le&\ D_2\left(\frac{1}{r^4}\int_{B_{\alpha r}} u^2\,dx+\frac{1}{r^2}\int_{\mathbb{R}^n} \varphi^2\,\left|\nabla u\right|^2\,dx\right),
			\end{aligned}
		\end{equation}
		where $D_2>0$ is a constant depending only on $n$, $\alpha$, $c_2$, $c_3$, and $c_4$. Furthermore, since
		\begin{align*}
			&\ \int_{\mathbb{R}^n} \varphi^2\left|\nabla u\right|^2\,dx \\
			\le&\ \frac{1}{c_1}\int_{\mathbb{R}^n} \varphi^2\, \nabla u\,\nabla^A u \,dx\\
			=&\ -\frac{1}{c_1}\int_{\mathbb{R}^n} \varphi^2\, u\,\Delta^A u \,dx-\frac{2}{c_1} \int_{\mathbb{R}^n} \varphi\, u\,\nabla\varphi\,\nabla^A u \,dx\\
			\le&\ \frac{1}{c_1}\int_{\mathbb{R}^n} \varphi^2\, |u|\,\left|\Delta^A u\right| \,dx+\frac{2c_2}{c_1} \int_{\mathbb{R}^n} \varphi\, |u|\,\left|\nabla\varphi\right|\left|\nabla u \right| \,dx\\ 
			\le&\ \frac{1}{2} \int_{\mathbb{R}^n} \varphi^2\left|\nabla u\right|^2\,dx+\frac{1}{2} \int_{\mathbb{R}^n} \varphi^4\left|\Delta^A u\right|^2\,dx+\frac{2c_2^2}{c_1^2} \int_{\mathbb{R}^n} \left|\nabla \varphi\right|^2u^2\,dx+\frac{1}{2c_1^2} \int_{B_{\alpha r}} u^2\,dx,
		\end{align*}
		where the first inequality follows from \eqref{EQ4-2}, this implies that 
		\begin{equation}\label{EQ4-6}
			\int_{\mathbb{R}^n} \varphi^2\left|\nabla u\right|^2\,dx \le \int_{\mathbb{R}^n} \varphi^4\left|\Delta^A u\right|^2\,dx+\frac{4c_2^2}{c_1^2} \int_{\mathbb{R}^n} \left|\nabla \varphi\right|^2u^2\,dx+\frac{1}{c_1^2} \int_{B_{\alpha r}} u^2\,dx.
		\end{equation}
		If we take $R_0=\max\left\{R,\sqrt{2\,D_2}, 1\right\}>0$, substitute \eqref{EQ4-6} into \eqref{EQ4-5} and use the definition of $\varphi$, then for $r >R_0$, there is a constant $C>0$ depending only on $n$, $\alpha$, $c_1$, $c_2$, $c_3$, and $c_4$ such that \eqref{RPI} holds. Hence the finite dimensional result follows.
	\end{proof}

\vskip 1cm

%---------------------------------------------------------------
%\bibliographystyle{amsplain}
%\bibliography{cankaowenxian}

\providecommand{\bysame}{\leavevmode\hbox to3em{\hrulefill}\thinspace}
\providecommand{\MR}{\relax\ifhmode\unskip\space\fi MR }
% \MRhref is called by the amsart/book/proc definition of \MR.
\providecommand{\MRhref}[2]{%
  \href{http://www.ams.org/mathscinet-getitem?mr=#1}{#2}
}
\providecommand{\href}[2]{#2}

\end{document}